\documentclass[10pt,a4paper]{article}
\usepackage{epsfig,amssymb,latexsym,amsthm}
\usepackage{amsmath}

\newtheorem{lemm}{Lemma}
\newtheorem{defi}{Definition}
\newtheorem{theo}{Theorem}
\newtheorem{rem}{Remark}
\newtheorem{cor}{Corollary}
\newtheorem{propo}{Proposition}

\newenvironment{definition}{\begin{defi}}{\end{defi}}
\newenvironment{lemma}{\begin{lemm}}{\end{lemm}}
\newenvironment{theorem}{\begin{theo}}{\end{theo}}
\newenvironment{remark}{\begin{rem}}{\end{rem}}

\newenvironment{prop}{\begin{propo}}{\end{propo}}

\newcommand{\bee}{\begin{equation}}
\newcommand{\ee}{\end{equation}}
\newcommand{\mb}{\mathbb}
\newcommand{\mf}{\mathbf}
\newcommand{\ml}{\mathcal}
\newcommand{\ds}{\displaystyle}
\newcommand{\bs}{\boldsymbol}

\begin{document}
\title{Optimal $C^{1,\frac12}$-regularity of $\ml H$-surfaces with a free boundary}
\author{Frank M\"uller\footnote{Frank M\"uller, Universit\"at Duisburg-Essen, Fakult\"at f\"ur Mathematik, 45117 Essen, e-mail: frank.mueller@uni-due.de}}

\maketitle

\begin{abstract}
\noindent 
We consider continuous stationary surfaces of prescribed mean curvature in $\mb R^3$ -- short\-ly called $\ml H$-surfaces -- with part of their boundary varying on a smooth support manifold $S$ with non-empty boundary. We allow that the $\ml H$-surface meets the support manifold non-perpendicularly and presume the $\ml H$-surface to be continuous up to the boundary. Then we show: If $S$ belongs to $C^2$ resp.~$C^{2,\mu}$, then the $\ml H$-surface belongs to $C^{1,\alpha}$ for any $\alpha\in(0,\frac12)$ resp.~$C^{1,\frac12}$ up to the boundary. The latter conclusion is optimal by an example due to S.\,Hildebrandt and J.C.C.\,Nitsche. Our result extends a known theorem for the special case of minimal surfaces. In addition, we present asymptotic expansions at boundary branch points. 

\vspace{2ex}
\noindent Mathematics Subject Classification 2020: 53A10, 49N60, 49Q05, 35C20 
\end{abstract}

\vspace{2ex}
Let $S$ be a differentiable, two-dimensional manifold in $\mb R^3$ with boundary $\partial S$. Writing
	$$B^+:=\{w=(u,v)=u+iv\,:\ |w|<1,\ v>0\},\quad I:=(-1,1)\subset\partial B^+$$
for the upper unit half-disc in $\mb R^2\simeq\mb C$ and the straight part of its boundary, we consider \emph{surfaces of prescribed mean curvature} or shortly \emph{$\ml H$-surfaces} on $B^+$, i.e.~solutions of the problem
	\bee\label{g1.1}
	\begin{array}{l}
	\mf x\in C^2(B^+,\mb R^3)\cap C^0(\overline{B^+},\mb R^3)\cap H_2^1(B^+,\mb R^3),\\[1ex]
	\Delta\mf x=2\ml H(\mf x)\mf x_u\wedge\mf x_v\quad\mbox{in}\ B^+,\\[1ex]
	|\mf x_u|=|\mf x_v|,\quad\langle\mf x_u,\mf x_v\rangle=0\quad\mbox{in}\ B^+,
	\end{array}
	\ee
which satisfy the \emph{free boundary condition}
	\bee\label{g1.2}
	\mf x(I)\subset S\cup\partial S.
	\ee
Here $H_2^1(B^+,\mb R^3)$ denotes the Sobolev-space of measurable mappings $\mf x:B^+\to\mb R^3$, which are quadratically integrable together with their first derivatives. In addition, $\Delta=\frac{\partial^2}{\partial u^2}+\frac{\partial^2}{\partial v^2}$ stands for the Laplace operator in $\mb R^2$ and $\mf y\wedge\mf z$, $\langle\mf y,\mf z\rangle$ denote cross product and scalar product in $\mb R^3$, respectively; the latter notation will be used for vectors in $\mb C^3$, too. Finally, $\ml H\in C^0(\mb R^3,\mb R)$ is a precribed function. In (\ref{g1.1}), the system in the second line is called \emph{Rellich's system} and the third line contains the \emph{conformality relations}.

As is well-known, the restriction $\mf x|_{\ml R}$ of a solution of (\ref{g1.1}) to the set 
	$$\ml R:=\{w\in B^+\,:\ \nabla\mf x(w):=(\mf x_u(w),\mf x_v(w))\not=\mf 0\}$$ 
of \emph{regular points} parametrizes a surface with mean curvature $H=\ml H\circ \mf x$. We emphasize that singular points with $\nabla\mf x(w)=\mf 0$, so-called \emph{branch points}, are specifically allowed. This is natural from the viewpoint of the calculus of variations: If $\mf Q\in C^1(\mb R^3,\mb R^3)$ is a vector field with $\mbox{div}\,\mf Q=2\ml H$, then solutions of (\ref{g1.1}) appear as stationary points of the functional
	\bee\label{g1.3}
	E_{\mf Q}(\mf y):=\int\limits_{B^+}\Big\{\frac12|\nabla\mf y|^2+\langle\mf Q(\mf y),\mf y_u\wedge\mf y_v\rangle\Big\}\,du\,dv,
	\ee
where so-called \emph{inner} and \emph{outer variations} $\mf y$ of $\mf x$ are allowed. Roughly speaking, inner variation means a perturbation in the parameters $(u,v)$ and outer variations are perturbations in the space that retain the boundary condition (\ref{g1.2}); see \cite{dht} Section\,1.4 for the exact definitions in the minimal surface case $\mf Q\equiv\bs0$. For our purposes, it suffices to give the exact definition of outer variations:

\begin{definition}\label{d1}
Let $\mf x\in C^0(\overline{B^+},\mb R^3)\cap H_2^1(B^+,\mb R^3)$ fulfill the boundary condition (\ref{g1.2}). A perturbation $\mf x^{(\varepsilon)}(w):=\mf x(w)+\varepsilon\bs\phi(w,\varepsilon)$, $0\le\varepsilon \ll 1$, is called \emph{outer variation of $\mf x$}, if $\bs\phi(\cdot,\varepsilon)$ belongs to
	$$\ml A_{\mf x}:=\left\{\mf y\in H^1_2(B^+,\mb R^3)\,:\ \begin{array}{l} \mf y=\mf x\ \mbox{on}\ \partial B^+\setminus I\\[0.5ex] \mf y(w)\in S\ \mbox{for a.a.~}w\in I \end{array}\right\}$$
for any $\varepsilon$, if the family of Dirichlet's integrals 
	$$D\big(\bs\phi(\cdot,\varepsilon)\big):=\int\limits_{B^+}\Big(|\bs\phi_u(w,\varepsilon)|^2+|\bs\phi_v(w,\varepsilon)|^2\Big)\,du\,dv,\quad 0\le\varepsilon\ll 1,$$
is uniformly bounded in $\varepsilon$, and if $\bs\phi(\cdot,\varepsilon)\to\bs\phi(\cdot,0)\in H^1_2(B^+,\mb R^3)\ (\varepsilon\to 0+)$ holds true a.e.~on $B^+$. The function $\bs\phi_0:=\bs\phi(\cdot,0)$ is to be termed \emph{direction of the variation}.
\end{definition}

\begin{definition}\label{d2}
Let $\mf Q\in C^1(\mb R^3,\mb R^3)$ be given, define $E_{\mf Q}$ by formula (\ref{g1.3}) and set $\ml H:=\frac12\mbox{div}\,\mf Q$. A solution $\mf x:\overline{B^+}\to\mb R^3$ of (\ref{g1.1})--(\ref{g1.2}) is called \emph{stationary free $\ml H$-surface (w.r.t.~$E_{\mf Q}$)}, if we have
	$$\delta E_{\mf Q}(\mf x,\bs\phi_0):=\lim_{\varepsilon\to 0+}\frac1\varepsilon\big[E_{\mf Q}({\mf x}^{(\varepsilon)})-E_{\mf Q}(\mf x)\big]\ge0$$
for any outer variation $\mf x^{(\varepsilon)}=\mf x+\varepsilon\bs\phi(\cdot,\varepsilon)$, $0\le\varepsilon\ll1$. The quantity $\delta E_{\mf Q}(\mf x,\bs\phi_0)$ is called the \emph{first variation of $E_{\mf Q}$ at $\mf x$ in the direction $\bs\phi_0$}.    
\end{definition} 

Now we are able to formulate our main result:

\begin{theorem}\label{t1}
Let $S\subset\mb R^3$ be a differentiable two-manifold and assume a vector-field $\mf Q\in C^1(\mb R^3,\mb R^3)$ to be given such that 
	\bee\label{g1.4}
	|\langle\mf Q,\mf n\rangle|<1\quad\mbox{on}\ S\cup\partial S
	\ee
is satisfied; here $\mf n:S\cup\partial S\to\mb R^3$ denotes a unit normal field on $S$ which we locally extend continuously to $\partial S$. In addition, let $\mf x\in C^2(B^+,\mb R^3)\cap C^0(\overline{B^+},\mb R^3)\cap H_2^1(B^+,\mb R^3)$ be a stationary free $\ml H$-surface with $\ml H:=\frac12\mbox{div}\,\mf Q$. 
\begin{itemize}
\item[(i)]
If $S\in C^2$, then we have $\mf x\in C^{1,\alpha}(B^+\cup I,\mb R^3)$ for any $\alpha\in(0,\frac12)$.
\item[(ii)]
If $S\in C^{2,\beta}$ and $\mf Q\in C^{1,\beta}(\mb R^3,\mb R^3)$ for some $\beta\in(0,1)$, then we have $\mf x\in C^{1,\frac12}(B^+\cup I,\mb R^3)$.
\end{itemize}
\end{theorem}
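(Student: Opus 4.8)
The plan is to reduce the free-boundary problem to an analysis of boundary regularity for the weakly coupled system consisting of Rellich's system together with the first-variation (natural) boundary condition along $I$, and then to exploit the conformality relations via a Hardy-space/holomorphic-function argument to upgrade from $C^{0,\alpha}$ through $C^{1,\alpha}$ up to $C^{1,1/2}$. First I would localize: after a conformal change of parameters and a $C^2$ (resp.\ $C^{2,\beta}$) change of coordinates in $\mathbb R^3$ flattening $S$ near a boundary point $w_0\in I$, one may assume $S$ is (a piece of) a coordinate plane, so that the free boundary condition \eqref{g1.2} becomes $x^3=0$ on a subarc of $I$ together with an obstacle-type restriction coming from $\partial S$; the hypothesis \eqref{g1.4} that $\mathbf x$ meets $S$ non-tangentially ($|\langle\mathbf Q,\mathbf n\rangle|<1$) is precisely what guarantees the transformed problem is uniformly elliptic/oblique and that $\mathbf x$ does not degenerate in the normal direction. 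Evaluating Definition~\ref{d2} on outer variations supported away from $\partial S$ (which are admissible since they preserve membership in $S$) yields the Euler equation $\Delta\mathbf x = 2\mathcal H(\mathbf x)\,\mathbf x_u\wedge\mathbf x_v$ in $B^+$ — already known — together with the natural boundary condition on $I$: the tangential components of the conormal derivative $\mathbf x_v$ (corrected by a term involving $\mathbf Q$) vanish, i.e.\ $\mathbf x_v$ is parallel to $\mathbf n\circ\mathbf x$ plus a controlled $\mathbf Q$-term, while $x^3=0$. This is the free-boundary analogue of the Plateau natural boundary condition and is the replacement for the boundary-branch-point analysis in the minimal-surface case treated in \cite{dht}.

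Next I would establish the base Hölder regularity $\mathbf x\in C^{0,\alpha}(B^+\cup I)$ for some $\alpha>0$. This is the standard Morrey-type argument: since $\mathbf x\in H^1_2$ and satisfies a system with quadratic gradient nonlinearity $|\Delta\mathbf x|\le 2\|\mathcal H\|_\infty|\nabla\mathbf x|^2$, together with the boundary condition forcing $\mathbf x(I)\subset S$, one combines a monotonicity/Courant–Lebesgue estimate for the Dirichlet energy on small half-discs with a Caccioppoli inequality at the free boundary (reflecting the $x^3$-component oddly and the tangential components via the obliqueness) to get decay $\int_{B^+_\rho(w_0)}|\nabla\mathbf x|^2\le C\rho^{2\alpha}$, hence $\mathbf x\in C^{0,\alpha}$ up to $I$. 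I would then bootstrap: write $\mathbf x = \mathbf h + \mathbf g$ near $w_0$, where $\mathbf h$ is harmonic with the same boundary data and $\mathbf g$ solves $\Delta\mathbf g = 2\mathcal H(\mathbf x)\mathbf x_u\wedge\mathbf x_v$ with zero boundary values; the right-hand side lies in $L^p$ for every $p<\infty$ once $\nabla\mathbf x\in L^2_{\mathrm{loc}}$ with Morrey decay, so $\mathbf g\in W^{2,p}\subset C^{1,\alpha}$ for every $\alpha<1$, and the harmonic part $\mathbf h$ is controlled by the regularity of the (now $C^{1,\alpha}$, once known) boundary values on $I$ — this is where a subtle iteration is needed, since the boundary condition on $I$ is itself only as regular as $\mathbf x$.

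The heart of the matter — and the step I expect to be the main obstacle — is extracting the \emph{sharp} exponent $\frac12$ in part (ii) and the $C^{1,\alpha}$ for all $\alpha<\frac12$ in part (i). Here I would introduce the complex derivative $\Phi:=\mathbf x_u - i\mathbf x_v=(\Phi^1,\Phi^2,\Phi^3)$, which by conformality satisfies $\langle\Phi,\Phi\rangle=\sum_k(\Phi^k)^2=0$ in $B^+$ and, because $\mathbf x$ solves Rellich's system, is \emph{not} holomorphic but satisfies $\bar\partial\Phi = -\,\mathcal H(\mathbf x)\,\overline{\mathbf x_u\wedge\mathbf x_v}\in C^{0,\alpha}$ (using the already-established $C^{1,\alpha}$-interior bound and the flattening); so $\Phi$ differs from a holomorphic function by a $C^{1,\alpha}$ term. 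The boundary condition on $I$ translates into a \emph{linear} relation $\mathrm{Im}\,[\,\overline{\mathbf a(u)}\cdot\Phi(u)\,]=0$ on $I$ with $\mathbf a\in C^{1,\beta}$ (resp.\ $C^{0,\alpha}$) built from $\mathbf n\circ\mathbf x$ and $\mathbf Q\circ\mathbf x$ — a Riemann–Hilbert/Hilbert-transform boundary condition. Reflecting across $I$ and invoking the Hölder theory of the Hilbert transform (Privalov's theorem / the Plemelj formulae for Cauchy integrals with $C^{0,\mu}$ density), together with the fact that the inhomogeneity $\bar\partial\Phi$ sits in $C^{0,\beta}$ with $\beta$ possibly $<\frac12$, one finds that $\Phi\in C^{0,\min(\beta,1/2)}$ near $w_0$ when $S\in C^{2,\beta}$, and $\Phi\in C^{0,\alpha}$ for every $\alpha<\frac12$ when $S$ is merely $C^2$ (the loss comes from differentiating the $C^2$ chart only $C^{0,\alpha}$-much); since $\Phi=\mathbf x_u-i\mathbf x_v$, this is exactly $\mathbf x\in C^{1,1/2}$ resp.\ $C^{1,\alpha}$ up to $I$. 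The delicate point is keeping the obstacle boundary $\partial S$ from interfering: one shows, using the non-tangency \eqref{g1.4} and the continuity of $\mathbf x$ up to $\overline{B^+}$, that either $\mathbf x(w_0)\in\partial S$ — in which case the asymptotic expansion at a boundary branch point (promised in the abstract) is derived by the same $\Phi$-analysis with a possible vanishing order — or $\mathbf x(w_0)$ lies in the interior of $S$, so a neighborhood of $w_0$ in $I$ maps into $S$ and the genuine Riemann–Hilbert analysis above applies verbatim. Finally, the optimality (the $\frac12$ cannot be improved) is not proved but is cited from the Hildebrandt–Nitsche example, consistent with the fact that $\beta=1$ already saturates $\min(\beta,\frac12)=\frac12$.
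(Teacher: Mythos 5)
Your outline correctly identifies several ingredients that the paper also uses (localization and flattening of $S$, the natural boundary condition extracted from stationarity, a Morrey-type argument for initial H\"older continuity, reflection across $I$, and Privalov/Vekua potential theory for the final lift from boundary regularity to regularity on $\overline{B_\varrho^+}$). But the central mechanism you propose for the exponent $\tfrac12$ is wrong, and this is a genuine gap rather than a presentational one. You claim that the boundary condition reduces to a \emph{linear} Riemann--Hilbert relation $\mathrm{Im}\,[\,\overline{\mathbf a(u)}\cdot\Phi(u)\,]=0$ for $\Phi=\mathbf x_u-i\mathbf x_v$, and that the H\"older theory of the Cauchy/Hilbert transform then yields $\Phi\in C^{0,\min(\beta,1/2)}$. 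Linear Riemann--Hilbert theory with $C^{0,\mu}$ coefficients and inhomogeneity produces $C^{0,\mu}$ solutions; it has no intrinsic barrier at $\tfrac12$. So your mechanism would give $\Phi\in C^{0,\beta}$, which for $\beta$ close to $1$ is \emph{better} than $C^{0,1/2}$ and contradicts the Hildebrandt--Nitsche optimality example, while for $\beta<\tfrac12$ it gives only $C^{1,\beta}$ and therefore \emph{fails to prove} the asserted $C^{1,1/2}$ of part (ii), which holds for every $\beta\in(0,1)$. The actual source of the exponent is different: the two linear boundary relations on $I$ (the tangential derivative of the constraint $x^3=\psi(x^1,x^2)$, and the first variation taken along the tangent of $\partial S$ --- the only direction in which two-sided variations are admissible at points mapping to $\partial S$) control only \emph{two} complex combinations $z^1,z^2$ of the components of $\mathbf x_w$. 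The third combination $z^3$ carries no linear boundary condition at all; it is recovered from the \emph{quadratic} conformality relation $\langle\mathbf x_w,\mathbf x_w\rangle=0$, which expresses $(z^3+\sum_j \tfrac{c_{j3}}{c_{33}}z^j)^2$ as a known quantity whose imaginary part vanishes on $I$. One then takes a square root via the Heinz--Hildebrandt--Nitsche lemma: a function $f$ with $\mathrm{Re}(f)\cdot\mathrm{Im}(f)=0$ on $I$ and $f^2\in C^{0,2\alpha}$ belongs to $C^{0,\alpha}$. Since $f^2$ can be pushed to $C^{0,\mu}$ for all $\mu<1$ under $S\in C^2$, and to $C^{1}$ (via Vekua's $C^{1,\gamma}$ theory for $z^1,z^2$) under $S\in C^{2,\beta}$, the square root yields exactly $C^{0,\alpha}$ for all $\alpha<\tfrac12$, respectively $C^{0,1/2}$ --- independently of $\beta$. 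Without this quadratic/square-root step your argument cannot produce the stated exponents.

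A second, related gap: you do not explain how to obtain an \emph{equality} boundary condition at points $w_0$ with $\mathbf x(w_0)\in\partial S$, where stationarity only gives a variational inequality $\delta E_{\mf Q}\ge 0$ because admissible variations must keep the boundary values on $S\cup\partial S$ (a one-sided constraint). Dismissing this case with ``the same $\Phi$-analysis with a possible vanishing order'' conflates points over $\partial S$ with branch points; a point over $\partial S$ need not be a branch point, and the difficulty there is the obstacle character of the constraint, not degeneracy of $\nabla\mathbf x$. The resolution is to test the first variation only with flows along the tangent field $\mf t=(1,\dot\gamma,\dot\eta)$ of $\partial S$, which are two-sided admissible and hence give a genuine equality (this is precisely what singles out the particular linear combination defining $z^2$ in (\ref{g1.9})). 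Finally, the hypothesis (\ref{g1.4}) is not a ``non-tangency of $\mathbf x$'' condition as you describe it, but a smallness condition $|q|\le q_0<1$ on the normal component of $\mf Q$ along $S$, which is what makes the algebraic system relating $(z^1,z^2)$ to $\nabla\mathbf x$ invertible (Proposition~\ref{p2}) and the associated energy inequality coercive.
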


\begin{remark}\label{r1}
For minimal surfaces, i.e.~the special case $\mf Q\equiv\bs0$, the result of Theorem\,\ref{t1} is due to R.\,Ye \cite{ye}. Under higher regularity assumptions on $S$ - namely $S\in C^3$ in case (i), $S\in C^4$ in case (ii) - these results for minimal surfaces were already proved by S.\,Hildebrandt and J.C.C.\,Nitsche \cite{hildebrandt-nitsche1}, \cite{hildebrandt-nitsche2}. In \cite{hildebrandt-nitsche2} the authors present an example showing the optimality of the regularity claimed in Theorem\,\ref{t1}\,(ii).
\end{remark}

\begin{remark}\label{r2}
In the minimal surface case, the assumption $\mf x\in C^0(\overline{B^+},\mb R^3)$ in Theorem\,\ref{t1} becomes redundant provided $S$ satisfies an additional uniformity condition. This is the famous continuity result for stationary minimal surfaces up to the free boundary, which is due to M.\,Gr\"uter, S.\,Hildebrandt, J.C.C.\,Nitsche \cite{grueter-hildebrandt-nitsche1}; see also G.\,Dziuk \cite{dziuk} regarding an analogue result for support surfaces without boundary. Concerning $\ml H$-surfaces, it is an open question whether stationarity implies continuity up to the boundary. However, there is an affirmative answer in the special case of vector-fields $\mf Q$ satisfying
	$$\langle\mf Q,\mf n\rangle=0\quad\mbox{on}\ S\cup\partial S;$$
see \cite{grueter-hildebrandt-nitsche2} for support surfaces without boundary, in \cite{mueller2} the case of support surfaces with boundary is shortly treated. In addition, minimality -- instead of the weaker assumption of stationarity -- implies continuity up to the boundary under very mild assumptions on $S$ and a smallness condition for $\mf Q$; see \cite{dht} Section\,2.5 or \cite{mueller3-} Section\,1.3.
\end{remark}

\begin{remark}\label{r3}
In the general case $\langle\mf Q,\mf n\rangle\not\equiv0$ on $S\cup\partial S$ the only results for stationary $\ml H$-surfaces known to the author are addressed to the case of support surfaces with empty boundary $\partial S=\emptyset$, see \cite{hildebrandt-jager}, \cite{hardt}, \cite{mueller3}.
\end{remark}

Our second theorem is concerned with boundary branch points:

\begin{theorem}\label{t2}
Let the assumptions of Theorem\,\ref{t1}\,(i) be satisfied and let $w_0\in I$ be a branch point of the stationary free $\ml H$-surface $\mf x$. If $\,\mf x:\overline{B^+}\to\mb R^3$ is non-constant, then there exist an integer $m\ge1$ and a vector $\mf a\in\mb C^3\setminus\{\bs0\}$ with $\langle\mf a,\mf a\rangle=0$, such that we have the representation
	\bee\label{g1.4+}
	\mf x_w(w)=\bs a(w-w_0)^m+o(|w-w_0|^m)\quad\mbox{as}\ w\to w_0.
	\ee
\end{theorem}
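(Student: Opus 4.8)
The plan is to establish the asymptotic expansion \eqref{g1.4+} by a standard ``Hartman--Wintner'' type argument, adapted to the free boundary situation. First I would observe that the conformality relations in \eqref{g1.1} imply $\langle\mf x_w,\mf x_w\rangle=0$ in $B^+$, where $\mf x_w=\frac12(\mf x_u-i\mf x_v)$, so that the vector $\bs a$ in the sought expansion automatically satisfies $\langle\bs a,\bs a\rangle=0$; this is why the null-vector condition appears. Next I would use the regularity already granted by Theorem~\ref{t1}\,(i): near the boundary point $w_0\in I$ the surface $\mf x$ lies in $C^{1,\alpha}(B^+\cup I)$ for every $\alpha<\tfrac12$. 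The free boundary condition \eqref{g1.2} together with stationarity yields, along $I$, a boundary condition of the form that $\mf x_v$ is tangential-plus-controlled-normal to $S$ (the precise relation involving $\mf Q$ and the Weingarten map of $S$); in particular $\mf x(I)\subset S$ is a $C^{1,\alpha}$ curve lying in the $C^2$ manifold $S$.

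The key step is a reflection. Using Fermi (or geodesic normal) coordinates adapted to $S$ near $\mf x(w_0)$, one can reduce the free boundary condition to a Plateau-type boundary condition: after composing $\mf x$ with a suitable $C^{1,\alpha}$-diffeomorphism of a neighbourhood of $\mf x(w_0)$ in $\mb R^3$ (straightening $S$ to a plane), the image surface $\tilde{\mf x}$ satisfies $\tilde{\mf x}^3=0$ on $I$ (third component vanishes) plus a Neumann-type condition for the first two components, the latter coming from the non-perpendicular contact encoded by \eqref{g1.4}. One then reflects $\tilde{\mf x}$ across $I$ --- the third component oddly, the first two evenly --- to obtain a $\mf w\in C^{1,\alpha}(B_\rho(w_0),\mb R^3)$ on a full disc that solves an elliptic system of the form $\Delta\mf w = \mf f$ with $|\mf f(w)|\le C|\nabla\mf w(w)|^2 + C|\nabla\mf w(w)|$, or better, an inhomogeneity that is $L^p$ for some $p>2$ and that vanishes to the appropriate order at $w_0$. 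The Rellich system contributes the quadratic term $2\ml H(\mf x)\mf x_u\wedge\mf x_v$, which is $O(|\nabla\mf w|^2)$; the reflection produces further lower-order terms controlled by the $C^2$-geometry of $S$ (this is where $S\in C^2$, and not merely $C^{1,\alpha}$, is used).

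From here I would invoke the similarity-principle / Hartman--Wintner lemma: if $\mf g:=\mf w_w$ satisfies $|\mf g_{\bar w}|\le C(|\mf g| + \text{h.o.t.})$ near $w_0$ and $\mf g$ is continuous with $\mf g(w_0)=\bs0$ (which holds since $w_0$ is a branch point, $\nabla\mf x(w_0)=\bs0$), then either $\mf g\equiv\bs0$ in a neighbourhood of $w_0$ --- excluded, after a continuation argument, because $\mf x$ is non-constant --- or there is an integer $m\ge1$ and $\bs a\in\mb C^3\setminus\{\bs0\}$ with $\mf g(w)=\bs a(w-w_0)^m + o(|w-w_0|^m)$. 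Restricting back to $\overline{B^+}$ and undoing the straightening diffeomorphism (which is $C^{1,\alpha}$ and fixes $\mf x(w_0)$ with derivative an isometry on the tangent plane) preserves the leading term up to a non-vanishing linear factor, so the expansion \eqref{g1.4+} persists with a possibly rescaled $\bs a$, still null. The main obstacle I anticipate is controlling the inhomogeneity in the reflected system with only $C^2$-regularity of $S$: the reflection across $I$ generically destroys $C^2$-smoothness of $\mf x$ itself, so one must argue at the level of the equation (and of $\mf x_w$, whose $\bar w$-derivative equals the right-hand side of Rellich's system in the distributional sense), verifying that the extra terms from the variable metric are of the order required by the Hartman--Wintner hypotheses and do not spoil the integer order of vanishing.
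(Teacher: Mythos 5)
There is a genuine gap, and it sits exactly at the point your sketch glosses over: the support manifold $S$ has a \emph{boundary}, and the branch point under consideration may satisfy $\mf x(w_0)\in\partial S$ (the case $\mf x(w_0)\in S$ is already covered by \cite{mueller3} and is the only case your argument addresses). When $\mf x(w_0)\in\partial S$, straightening $S$ does not produce a Plateau-type condition ``third component vanishes on $I$ plus a Neumann condition for the other two'': the trace $\mf x(I)$ is only constrained to lie in a \emph{half}-plane, and stationarity gives a variational \emph{inequality} rather than an equation in the direction tangent to $S$ but normal to $\partial S$. So only two real boundary conditions are available pointwise on $I$ — the paper encodes them as $\mbox{Im}\,z^1=\mbox{Im}\,z^2=0$, where $z^1,z^2$ are the specific combinations (\ref{g1.9}) built from the normal to $S$ and the tangent to $\partial S$ (with the vector field $\mf Q$ absorbed into $z^2$ to handle the non-perpendicular contact). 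Your even/odd reflection of all three components of $\tilde{\mf x}$ across $I$ therefore does not yield a solution of an elliptic system on a full disc, and the Hartman--Wintner lemma cannot be applied to the full gradient $\mf w_w$ as you propose.

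What the paper actually does is apply the Hartman--Wintner technique only to the reflected two-component function $\hat{\mf z}=(z^1,z^2)$, obtaining $\hat{\mf z}(w)=\hat{\mf b}w^m+o(|w|^m)$, and then recovers the expansion of the missing third quantity $z^3$ \emph{indirectly}: the conformality relations, rewritten as the quadratic identity (\ref{g1.37}) for $\bs\zeta=(z^1,z^2,z^3)$, express $\bigl(z^3+\sum_j\frac{c_{j3}}{c_{33}}z^j\bigr)^2$ through $z^1,z^2$ alone; dividing by $w^{2m}$ and invoking a variant of the Heinz--Hildebrandt--Nitsche square-root lemma (Lemma\,\ref{l3}\,(a)) yields $w^{-m}z^3(w)\to b^3$, hence the full expansion (\ref{g1.39}) for $\bs\zeta$ and, via $\mf x_w=(\mf B^{-1}\circ\mf x)\bs\zeta$, the claimed (\ref{g1.4+}). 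This conformality-plus-square-root step is the essential idea of the proof and is absent from your proposal; without it (or a correct treatment of the one-sided condition at $\partial S$) the argument does not close. Your remaining points — the null condition $\langle\mf a,\mf a\rangle=0$ from conformality, the exclusion of $\hat{\mf z}\equiv\bs0$ via non-constancy, and the need to work at the level of first derivatives because reflection destroys $C^2$-smoothness — are all consistent with the paper.
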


\begin{remark}\label{r4}
The proof of Theorem\,\ref{t2} can be found at the end of the paper; for branch points $w_0\in I$ with $\mf x(w_0)\in S$ the asymptotic expansion (\ref{g1.4+}) has been already proved in \cite{mueller3} Theorem\,1.13. The usual direct consequences as finiteness of boundary branch points in $\overline{B^+}\cap B_r(0)$ for any $r\in(0,1)$ and continuity of the surface normal of $\,\mf x$ up to the branch points follow; see e.g.~\cite{mueller3} Remarks 5.1 and 5.2.
\end{remark}

Preparing for the proof of Theorem\,\ref{t1}, we first have to localize the setting: Obviously, it suffices to show that for any $w_0\in I$ there exists some $\delta>0$ with $\mf x\in C^{1,\mu}(\overline{B_\delta^+(w_0)},\mb R^3)$ for $\mu\in(0,\frac12)$ or $\mu=\frac12$, respectively. Here we abbreviated 
	\begin{eqnarray*}
	&& B_\delta(w_0):=\{w=u+iv\in\mb C\,:\ |w-w_0|<\delta\},\\[1ex]
	&& B_\delta^+(w_0):=\{w=u+iv\in B_\delta(w_0)\,:\ v>0\}.
	\end{eqnarray*}
Since this result is included in Theorem\,1.3 of \cite{mueller3} for $w_0\in I$ with $\mf x_0:=\mf x(w_0)\in S$, we may assume $\mf x_0\in\partial S$. We localize around $\mf x_0$ which is possible according to the assumption $\mf x\in C^0(\overline{B^+},\mb R^3)$. After a suitable rotation and translation we can presume $\mf x_0=\bs0$ as well as the existence of some neighbourhood $\ml U=\ml U(\mf x_0)\subset\mb R^3$ and functions $\gamma\in C^2([-r,r])$, $\psi\in C^2(\overline{B_r(0)})$, $r>0$, with 
	\bee\label{g1.5}
	\gamma(0)=\frac d{ds}\gamma(0)=0,\quad\psi(0)=\nabla\psi(0)=0,
	\ee
such that we have the local representations
	\bee\label{g1.6}
	\begin{array}{l}
	S\cap\ml U=\big\{\mf p=(p^1,p^2,p^3)\in\Omega\times\mb R\,:\ p^3=\psi(p^1,p^2)\big\},\\[1ex]
	\partial S\cap\ml U=\big\{\mf p=(p^1,p^2,p^3)\in\Gamma\times\mb R\,:\ p^3=\psi(p^1,p^2)\big\},
	\end{array}
	\ee
where we abbreviated
	\bee\label{g1.7}
	\begin{array}{l}
	\Omega:=\big\{(p^1,p^2)\in B_r(0)\,:\ p^2>\gamma(p^1)\big\},\\[1ex]
	\Gamma:=\big\{(p^1,p^2)\in B_r(0)\,:\ p^2=\gamma(p^1)\big\}.
	\end{array}
	\ee
Now choose $\delta>0$ with $|\mf x(w)|<r$ for all $w\in\overline{B_\delta^+(w_0)}$. Since the system (\ref{g1.1}) is conformally invariant, we may reparametrize $\mf x|_{\overline{B_\delta^+(w_0)}}$ over $\overline{B^+}$ without renaming and obtain
	\bee\label{g1.8}
	\mf x(\overline{B^+})\subset\ml B_r:=\big\{\mf p\in\mb R^3\,:\ |\mf p|<r\big\},\qquad\mf x(0)=\bs0.
	\ee
In the following, we will repeatedly scale $r>0$ down -- sometimes without further command -- always assuming (\ref{g1.8}) to be satisfied.

Next we define
	\bee\label{g1.8-}
	q=q(\mf p):=Q^3(\mf p)-\psi_{p^1}(p^1,p^2)Q^1(\mf p)-\psi_{p^2}(p^1,p^2)Q^2(\mf p),
	\ee
where $Q^1,Q^2,Q^3$ are the components of $\mf Q$. Note that the smallness condition (\ref{g1.4}) and the normalization (\ref{g1.5}) imply $q\in C^1(\overline{\ml B_r})$ as well as
	\bee\label{g1.8+}
	|q(\mf p)|\le q_0<1\quad\mbox{for all}\ \mf p\in\overline{\ml B_r}
	\ee
with sufficiently small $r>0$; here $q_0\in(0,1)$ denotes some suitable constant. 

Writing $\dot\gamma:=\frac d{ds}\gamma$, we set
	\bee\label{g1.9}
	\begin{array}{l}
	z^1:=-i\psi_{p^1}x_w^1-i\psi_{p^2}x_w^2+ix_w^3,\\[1ex]
	z^2:=(1-i q\dot\gamma)x_w^1+(\dot\gamma+iq)x_w^2+(\psi_{p^1}+\psi_{p^2}\dot\gamma)x_w^3\quad\mbox{on}\ B^+.
	\end{array}
	\ee
Here we abbreviated $\psi_{p^j}=\psi_{p^j}(x^1,x^2)$, $\gamma=\gamma(x^1)$, and $q=q(\mf x)$, and we used one of the Wirtinger derivatives $x^j_w=\frac{\partial x^j}{\partial w}$ defined by the operators
	$$\frac{\partial}{\partial w}:=\frac12\Big(\frac{\partial}{\partial u}-i\frac{\partial}{\partial v}\Big),\quad\frac{\partial}{\partial\overline w}:=\frac12\Big(\frac{\partial}{\partial u}+i\frac{\partial}{\partial v}\Big).$$
As a first important observation we infer the following

\begin{prop}\label{p1}
The mapping $\mf z:=(z^1,z^2):B^+\to\mb R^3$ belongs to $C^1(B^+,\mb C^2)\cap L_2(B^+,\mb C^2)$ and satisfies the weak boundary condition
	\bee\label{g1.10}
	\liminf_{\varrho\to0}\bigg|\int\limits_{I_\varrho}\big\langle\bs\lambda(w),\mbox{Im}\,\mf z(w)\big\rangle\,du\bigg|=0\quad\mbox{for all}\ \bs\lambda\in C^1_c(B^+\cup I,\mb R^2),
	\ee
where we set $I_\varrho:=\{w=u+iv\in B^+\,:\ v=\varrho\}$ for $\varrho>0$. 
\end{prop}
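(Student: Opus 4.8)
The plan is to verify the three assertions in turn, the regularity, the integrability, and the weak boundary condition. The interior regularity $\mf z\in C^1(B^+,\mb C^2)$ is immediate: by standard elliptic regularity for Rellich's system (the right-hand side $2\ml H(\mf x)\mf x_u\wedge\mf x_v$ is continuous), $\mf x\in C^{1,\alpha}_{loc}(B^+)$, hence $\mf x_w\in C^0(B^+)$, and then the coefficients $\psi_{p^j}\circ\mf x$, $\dot\gamma\circ\mf x$, $q\circ\mf x$ are $C^1$ in $w$ on $B^+$ as compositions; actually since $\mf x\in C^2(B^+)$ by hypothesis, $\mf z\in C^1(B^+)$ directly. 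The $L_2$-bound follows from $\mf x\in H^1_2(B^+,\mb R^3)$: each $z^j$ is a bounded linear combination (bounded because $\psi,\nabla\psi,\dot\gamma,q$ are bounded on $\overline{\ml B_r}$ by \eqref{g1.8+} and \eqref{g1.5}) of the components of $\mf x_w$, and $|\mf x_w|^2=\tfrac14|\nabla\mf x|^2\in L_1(B^+)$, so $\mf z\in L_2(B^+,\mb C^2)$.

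The substantive part is the weak boundary condition \eqref{g1.10}. First I would unpack $\mathrm{Im}\,\mf z$ on $I$ in terms of $\mf x$ and its tangential derivative. Writing $\mf x_w=\tfrac12(\mf x_u-i\mf x_v)$, one finds that $\mathrm{Im}\,z^1$ and $\mathrm{Im}\,z^2$ combine the normal-direction data of $\mf x$ (the quantities measuring how far $\mf x$ and $\mf x_v$ depart from the tangent plane of $S$, resp.\ how far $\mf x(I)$ departs from $\partial S$) against the tangential derivative $\mf x_u$. The idea is that these are exactly the two natural ``constraint defects'': the definitions \eqref{g1.9} are rigged so that $\mathrm{Im}\,z^1$ encodes the condition $x^3=\psi(x^1,x^2)$ differentiated along $I$ — i.e.\ that $\mf x_u$ is tangent to $S$ — while $\mathrm{Im}\,z^2$ encodes the free-boundary/stationarity relation coming from the conformality law and the contact condition with angle governed by $q$. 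Then I would test the Euler–Lagrange relation: the stationarity inequality $\delta E_{\mf Q}(\mf x,\bs\phi_0)\ge0$ from Definition \ref{d2}, applied to outer variations with direction $\bs\phi_0$ tangential to $S$ along $I$ (and, by choosing $\bs\phi_0$ and $-\bs\phi_0$ where the admissible set permits, upgraded to an equality), yields the weak form of the natural boundary condition; a suitable choice of $\bs\phi$ built from $\bs\lambda\in C^1_c(B^+\cup I,\mb R^2)$ pushed into the tangent bundle of $S$ via the graph map produces precisely the integral over $I_\varrho$ in \eqref{g1.10} in the limit $\varrho\to0$, plus interior terms that integrate against $C^1_c$ test functions and are controlled.

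The main obstacle I anticipate is the passage to the trace on $I$: a priori $\mf x$ is only $C^0(\overline{B^+})\cap H^1_2(B^+)$ up to $I$, so $\mf z$ has no classical boundary values there, which is exactly why the statement is phrased with $\liminf_{\varrho\to0}$ over the interior circles $I_\varrho$ rather than as a genuine trace identity. The technical work is to show that the interior first-variation identity (valid on every $B^+_\varrho$-type region because $\mf x\in C^2(B^+)$), when integrated by parts, produces a boundary term on $I_\varrho$ equal to $\int_{I_\varrho}\langle\bs\lambda,\mathrm{Im}\,\mf z\rangle\,du$ up to an error that is $o(1)$ along a sequence $\varrho\to0$; here one uses the uniform $H^1_2$-bound on the variations (Definition \ref{d1}) and an absolute-continuity / Fubini argument to select good slices $\varrho_k\to0$ on which the remainder vanishes, together with the stationarity inequality to kill the sign-definite contribution. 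The smallness \eqref{g1.8+} of $q$ enters to guarantee that the quadratic form appearing in the $z^2$-part is non-degenerate, so that tangential variations indeed span the relevant directions and no information is lost. Once the good-slice argument is in place, \eqref{g1.10} follows by letting $k\to\infty$.
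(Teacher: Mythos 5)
Your outline follows the same route as the paper: the $C^1\cap L_2$ statements are immediate from $\mf x\in C^2(B^+)\cap H_2^1(B^+)$ and the boundedness of $\psi_{p^j},\dot\gamma,q$ on $\overline{\ml B_r}$; $\mbox{Im}\,z^1$ is the differentiated constraint $\tfrac12\frac{\partial}{\partial u}[x^3-\psi(x^1,x^2)]$; $\mbox{Im}\,z^2$ is tied to the natural boundary quantity produced by stationarity in directions tangent to $\partial S$ (with the two-sided variations $\pm\bs\phi_0$ upgrading the inequality to the equality $\lim_{\varrho\to0}\int_{I_\varrho}\alpha\,\langle\mf t(x^1),\mf x_v+\mf Q(\mf x)\wedge\mf x_u\rangle\,du=0$, $\mf t(s)=(1,\dot\gamma(s),\frac{d}{ds}\psi(s,\gamma(s)))$); and the absence of a trace is handled through the $\liminf$ over slices $I_\varrho$. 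That is exactly the paper's architecture, so the plan is sound.

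However, the step that carries essentially all of the content is only asserted, not proved: you say the definitions (\ref{g1.9}) ``are rigged'' so that $\mbox{Im}\,z^2$ encodes the stationarity relation, but what is actually needed is the precise identity
$$2\,\mbox{Im}\,z^2=-\big\langle\mf t(x^1),\mf x_v+\mf Q(\mf x)\wedge\mf x_u\big\rangle+(Q^2-\dot\gamma Q^1)\,\frac{\partial}{\partial u}\big[x^3-\psi(x^1,x^2)\big],$$
a genuine computation using $q=Q^3-\psi_{p^1}Q^1-\psi_{p^2}Q^2$ and $\dot\eta=\psi_{p^1}+\psi_{p^2}\dot\gamma$; without it one does not see that the residual term is again a multiple of $\partial_u[x^3-\psi]$. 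Relatedly, this residual term and the whole contribution $\lambda_1\,\mbox{Im}\,z^1$ are \emph{not} controlled by stationarity at all: they are disposed of by a one-dimensional integration by parts in $u$ along the slice $I_\varrho$ followed by the trace estimate $\int_{I_\varrho}[x^3-\psi(x^1,x^2)]^2\,du\le c\varrho$, which comes solely from the geometric boundary condition $x^3=\psi$ on $I$; the good-slice selection is then only needed to make $\varrho\int_{I_\varrho}|\nabla\mf x|^2\,du$ vanish along a sequence, and there is no ``sign-definite contribution killed by the stationarity inequality''. Finally, your remark that the smallness $|q|\le q_0<1$ is needed here so that tangential variations span the relevant directions is misplaced: condition (\ref{g1.8+}) plays no role in Proposition \ref{p1}; it enters only in Proposition \ref{p2}, where one inverts the map $\nabla\mf x\mapsto\mf z$.
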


\begin{proof}
The claimed regularity of $\mf z$ is obvious by definition. In order to prove (\ref{g1.10}), we set $\eta(s):=\psi(s,\gamma(s))$ and $\mf t(s):=(1,\dot\gamma(s),\dot\eta(s))$, $s\in(-r,r)$. Then $\mf t(s)$ is tangential to $\partial S$ at the point $(s,\gamma(s),\eta(s))$. If we choose $\alpha\in C^1_c(B^+\cup I)$ arbitrarily, the stationarity of $\mf x$ yields
	\bee\label{g1.11}
	\lim_{\varrho\to0+}\int\limits_{I_\varrho}\alpha\big\langle\mf t(x^1),\mf x_v+\mf Q(\mf x)\wedge\mf x_u\big\rangle\,du=0;
	\ee
this can be proved by combining the flow argument in \cite{dht} pp.~32--33 with \cite{mueller1} Lemma\,3. Now we set $\zeta:=\langle\mf t(x^1),\mf x_v+\mf Q(\mf x)\wedge\mf x_u\rangle$ and claim 
	\bee\label{g1.12}
	2\,\mbox{Im}\,z^2=-\zeta+(Q^2-\dot\gamma Q^1)(x_u^3-\psi_{p^1}x_u^1-\psi_{p^2}x_u^2)\quad\mbox{on}\ B^+,
	\ee
where we again abbreviated $Q^j=Q^j(\mf x)$, etc. Indeed, we compute
	\begin{eqnarray*}
	\zeta & = & x_v^1+Q^2x_u^3-Q^3x_u^2+\dot\gamma(x_v^2+Q^3x_u^1-Q^1x_u^3)+\dot\eta(x_v^3+Q^1x_u^2-Q^2x_u^1)\\[1ex]
	&=& x_v^1+\dot\gamma x_v^2-(Q^3-\psi_{p^1}Q^1-\psi_{p^2}Q^2)(x_u^2-\dot\gamma x_u^1)+(\psi_{p^1}+\psi_{p^2}\dot\gamma)x_v^3\\[1ex]
	&& +(Q^2-\dot\gamma Q^1)(x_u^3-\psi_{p^1}x_u^1-\psi_{p^2}x_u^2)\quad\mbox{on}\ B^+,
	\end{eqnarray*}
having $\dot\eta=\psi_{p^1}+\psi_{p^2}\dot\gamma$ in mind. Hence, the definition (\ref{g1.9}) of $z^2$ yields (\ref{g1.12}). 

Next we note the inequality
	\bee\label{g1.13}
	\int\limits_{I_\varrho}[x^3-\psi(x^1,x^2)]^2\,du\le c\varrho\int\limits_{B^+}|\nabla\mf x|^2\,du\,dv\le c\varrho,\quad \delta\in(0,1),
	\ee
with some constant $c>0$. This is an easy consequence of the boundary condition $x^3=\psi(x^1,x^2)$ on $I$ and the boundedness of $|\nabla\psi|$.

Now let $\bs\lambda=(\lambda_1,\lambda_2)\in C^1_c(B^+\cup I,\mb R^2)$ be chosen arbitrarily. Then we find
	\begin{eqnarray*}
	&&\hspace*{-5ex}\liminf_{\varrho\to0}\bigg|\int\limits_{I_\varrho}\big\langle\bs\lambda(w),\mbox{Im}\,\mf z(w)\big\rangle\,du\bigg|\\
	&& =\ \liminf_{\varrho\to0}\bigg|\int\limits_{I_\varrho}\big(\lambda_1\,\mbox{Im}\,z^1+\lambda_2\,\mbox{Im}\,z^2\big)\,du\bigg|^2\\
	&& \hspace*{-2.7ex}\stackrel{(\ref{g1.11}),(\ref{g1.12})}{=}\liminf_{\varrho\to0}\frac14\bigg|\int\limits_{I_\varrho}\big[\lambda_1+\lambda_2(Q^2-\dot\gamma Q^1)\big]\big[x_u^3-\psi_{p^1}x_u^1-\psi_{p^2}x_u^2\big]\,du\bigg|^2\\
	&& =\ \liminf_{\varrho\to0}\frac14\bigg|\int\limits_{I_\varrho}\big[x^3-\psi(x^1,x^2)\big]\frac\partial{\partial u}\big[\lambda_1+\lambda_2(Q^2-\dot\gamma Q^1)\big]\,du\bigg|^2\\
	\end{eqnarray*}
and are hence able to estimate
	\begin{eqnarray*}
	&&\hspace*{-5ex}\liminf_{\varrho\to0}\bigg|\int\limits_{I_\varrho}\big\langle\bs\lambda(w),\mbox{Im}\,\mf z(w)\big\rangle\,du\bigg|\\
	&& \le\ \liminf_{\varrho\to0}\frac14\int\limits_{I_\varrho}\big[x^3-\psi(x^1,x^2)\big]^2\,du\cdot\int\limits_{I_\varrho}\Big\{\frac\partial{\partial u}\big[\lambda_1+\lambda_2
	(Q^2-\dot\gamma Q^1)\big]\Big\}^2\,du\\
	&& \hspace*{-0.7ex}\stackrel{(\ref{g1.13})}{\le}\liminf_{\varrho\to0}c\varrho\bigg(1+\int\limits_{I_\varrho}|\nabla\mf x|^2\,du\bigg).
	\end{eqnarray*}
with an adjusted constant $c>0$. Using $\mf x\in H^1_2(B^+,\mb R^3)$, one can easily prove that the right hand side of this inequality vanishes (see e.g.~\cite{mueller3} Proposition\,2.1).
\end{proof}

In order to be able to relate the auxiliary function $\mf z$ with $\mf x$ we also need the following result:

\begin{prop}\label{p2}
The mapping $\mf z=(z^1,z^2)$ defined in (\ref{g1.9}) fulfils the relations
	\bee\label{g1.14}
	c^{-1}|\nabla\mf x|\le|\mf z|\le c|\nabla\mf x|\quad\mbox{on}\ B^+
	\ee
with some constant $c>0$.	
\end{prop}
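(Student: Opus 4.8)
The plan is to reduce (\ref{g1.14}) to an equivalent two-sided bound between $|\mf z|$ and the Wirtinger gradient $\mf x_w$, to settle the upper bound by the mere boundedness of the coefficients occurring in (\ref{g1.9}), and to deduce the lower bound from a compactness argument in which the conformality relations are decisive.

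First I would record that $x_w^j=\frac12(x_u^j-ix_v^j)$ gives $|\mf x_w|^2=\frac14\big(|\mf x_u|^2+|\mf x_v|^2\big)=\frac14|\nabla\mf x|^2$, so it suffices to prove $c^{-1}|\mf x_w|\le|\mf z|\le c|\mf x_w|$ on $B^+$. Since $\mf x(\overline{B^+})\subset\overline{\ml B_r}$ and $\psi\in C^2(\overline{B_r(0)})$, $\gamma\in C^2([-r,r])$, $q\in C^1(\overline{\ml B_r})$ with $|q|\le q_0<1$, every coefficient appearing on the right-hand sides of (\ref{g1.9}) is bounded on $\overline{B^+}$; the triangle inequality then yields $|\mf z|\le c\big(|x_w^1|+|x_w^2|+|x_w^3|\big)\le c|\mf x_w|$, which is the upper bound.

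For the lower bound I would freeze the coefficients. For $\mf p=(p^1,p^2,p^3)\in\overline{\ml B_r}$ let $L_{\mf p}:\mb C^3\to\mb C^2$ be the linear map obtained by substituting $(p^1,p^2)$ for $(x^1,x^2)$ and a free variable $\bs\xi=(\xi^1,\xi^2,\xi^3)$ for $(x_w^1,x_w^2,x_w^3)$ in (\ref{g1.9}); thus $\mf z(w)=L_{\mf x(w)}\big(\mf x_w(w)\big)$ on $B^+$. The conformality relations in (\ref{g1.1}) are equivalent to $\langle\mf x_w,\mf x_w\rangle=(x_w^1)^2+(x_w^2)^2+(x_w^3)^2=0$, so $\mf x_w(w)$ always lies on the complex null cone $\ml N:=\{\bs\xi\in\mb C^3:\langle\bs\xi,\bs\xi\rangle=0\}$. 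The crucial point is that $L_{\bs0}$ is injective on $\ml N$: by (\ref{g1.5}) one has $\psi_{p^1}(0)=\psi_{p^2}(0)=\dot\gamma(0)=0$, hence $L_{\bs0}(\bs\xi)=\big(i\xi^3,\ \xi^1+iq_*\xi^2\big)$ with $q_*:=q(\bs0)\in\mb R$, and $L_{\bs0}(\bs\xi)=\bs0$ forces $\xi^3=0$ and $\xi^1=-iq_*\xi^2$, so that $\langle\bs\xi,\bs\xi\rangle=(1-q_*^2)(\xi^2)^2$; since $|q_*|\le q_0<1$ by (\ref{g1.8+}) this vanishes only for $\bs\xi=\bs0$. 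This is exactly where the non-perpendicularity hypothesis (\ref{g1.4}) is used.

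Finally I would conclude by continuity. The function $\mf p\mapsto\mu(\mf p):=\min\{|L_{\mf p}(\bs\xi)|:\bs\xi\in\ml N,\ |\bs\xi|=1\}$ is continuous on $\overline{\ml B_r}$ — the entries of $L_{\mf p}$ depend continuously on $\mf p$ and $\ml N\cap\{|\bs\xi|=1\}$ is compact — and $\mu(\bs0)>0$ by the previous step. After shrinking $r$ so that $\mu\ge\frac12\mu(\bs0)=:c^{-1}>0$ on $\overline{\ml B_r}$, applying the estimate to $\bs\xi=\mf x_w(w)/|\mf x_w(w)|$ and $\mf p=\mf x(w)$ at points $w\in B^+$ with $\mf x_w(w)\not=\bs0$ gives
\[|\mf z(w)|=\big|L_{\mf x(w)}\big(\mf x_w(w)\big)\big|\ge c^{-1}|\mf x_w(w)|,\]
while at branch points both sides vanish. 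Together with the first two steps and $|\mf x_w|=\frac12|\nabla\mf x|$ this proves (\ref{g1.14}). I expect the main obstacle to be the injectivity claim for $L_{\bs0}$ on $\ml N$: the map $\mb C^3\to\mb C^2$ cannot be injective on all of $\mb C^3$, so one genuinely has to exploit that $\mf x_w$ is confined to the null cone, and the positivity $1-q_*^2>0$ supplied by (\ref{g1.4}) is precisely what makes the restriction injective; the remainder is bookkeeping (Wirtinger derivatives, boundedness of coefficients) and a soft compactness argument.
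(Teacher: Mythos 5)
Your proof is correct, and for the essential (lower) bound it takes a genuinely different route from the paper. The paper splits (\ref{g1.9}) as $\mf z=\mf A(\mf x)\cdot(x_w^1,x_w^3)^T+\mf b(\mf x)\,x_w^2$ with the $2\times2$ matrix $\mf A$ and vector $\mf b$ of (\ref{g1.16}), checks $|\det\mf A|\ge1-\varepsilon$ and $|\mf A^{-1}\mf b|\le q_0+\varepsilon<1$ for small $r$, and then uses the conformality relation in the form $|x_w^2|^2\le|x_w^1|^2+|x_w^3|^2$ to absorb the $x_w^2$-term — an explicit, quantitative absorption argument that produces a constant depending only on $q_0$. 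You instead freeze the coefficients, observe that conformality confines $\mf x_w$ to the null cone $\ml N$, verify that the frozen map $L_{\bs0}(\bs\xi)=(i\xi^3,\xi^1+iq_*\xi^2)$ kills no nonzero null vector precisely because $1-q_*^2>0$, and conclude by compactness of $\ml N\cap\{|\bs\xi|=1\}$ and continuity in $\mf p$ after shrinking $r$ (which the paper explicitly licenses). The two arguments exploit the same two ingredients — the normalization (\ref{g1.5}) and the smallness $|q|\le q_0<1$ from (\ref{g1.4}) via (\ref{g1.8+}), combined with $\langle\mf x_w,\mf x_w\rangle=0$ — but your version is softer and more conceptual at the price of a non-explicit constant, while the paper's computation makes the dependence on $q_0$ visible. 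Two cosmetic remarks: labelling (\ref{g1.4}) the ``non-perpendicularity hypothesis'' is loose (it is a smallness condition on the normal component of $\mf Q$, replacing the perpendicularity case $\langle\mf Q,\mf n\rangle=0$), and the codomain of $\mf z$ in the statement should of course be read as $\mb C^2$; neither affects your argument.
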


\begin{proof}
The right-hand inequality in (\ref{g1.14}) is obvious by definition. In order to prove the left-hand inequality we write (\ref{g1.9}) as
	\bee\label{g1.15}
	\mf z=\mf A(\mf x)\cdot \begin{pmatrix} x_w^1\\[0.5ex] x_w^3 \end{pmatrix} +\mf b(\mf x) x_w^2\quad\mbox{on}\ B^+
	\ee
with
	\bee\label{g1.16}
	\mf A:=\begin{pmatrix} -i\psi_{p^1} & i \\[0.5ex] 1-iq\dot\gamma & \psi_{p^1}+\psi_{p^2}\dot\gamma \end{pmatrix},\quad \mf b:=\begin{pmatrix} -i\psi_{p^2} \\[0.5ex] \dot\gamma +iq \end{pmatrix}.
	\ee
Pick $0<\varepsilon<1-q_0$ arbitrarily. According to the normalization (\ref{g1.5}) we may choose $r=r(\varepsilon)>0$ sufficiently small to ensure 
	\bee\label{g1.17}
	|\det\mf A(\mf p)|\ge1-\varepsilon>0\quad\mbox{for}\ \mf p\in\overline{\ml B_r}.
	\ee
In particular, the inverse $\mf A^{-1}(\mf p)$ exists on $\overline{\ml B_r}$, and we conclude
	\bee\label{g1.18}
	\begin{pmatrix} x_w^1\\[0.5ex] x_w^3 \end{pmatrix}=\mf A^{-1}(\mf x)\cdot\mf z-\mf A^{-1}(\mf x)\cdot\mf b(\mf x)x_w^2\quad\mbox{on}\ B^+.
	\ee
Computing
	$$\mf A^{-1}\cdot\mf b=\frac1{\det\mf A}\begin{pmatrix} q-i[\psi_{p^1}\psi_{p^2}+(1+\psi_{p^2}^2)\dot\gamma]\\[0.5ex] 
	q(\psi_{p^1}+\psi_{p^2}\dot\gamma)+i(\psi_{p^2}-\psi_{p^1}\dot\gamma) \end{pmatrix},$$
the smallness (\ref{g1.8+}) of $q$, inequality (\ref{g1.17}), and the normalization (\ref{g1.5}) imply 
	$$|\mf A^{-1}(\mf p)\cdot\mf b(\mf p)|\le q_0+\varepsilon\quad\mbox{for}\ \mf p\in\overline{\ml B_r}$$
with sufficiently small $r=r(\varepsilon)>0$. Finally, we write the conformality relations in (\ref{g1.1}) as $\langle\mf x_w,\mf x_w\rangle=0$ in $B^+$, which yields
	$$|x^2_w|^2\le|x_w^1|^2+|x_w^3|^2\quad\mbox{on}\ B^+.$$
With these estimates we conclude
	$$\sqrt{|x_w^1|^2+|x_w^3|^2}\le c|\mf z|+(q_0+\varepsilon)\sqrt{|x_w^1|^2+|x_w^3|^2}\quad\mbox{on}\ B^+$$
from (\ref{g1.18}), where $c>0$ denotes a constant. Choosing e.g.~$\varepsilon=\frac{1-q_0}2$, we hence obtain the claimed estimate (\ref{g1.10}) with an aligned $c>0$.
\end{proof}

Combining Propositions \ref{p1} and \ref{p2}, we arrive at the following

\begin{lemma}\label{l1}
Let $\mf z=(z^1,z^2)$ be defined by (\ref{g1.9}). Set $B:=B_1(0)$, $B^-:=B\setminus(B^+\cup I)$, and consider the reflected function
	\bee\label{g1.19}
	\hat{\mf z}(w):=\left\{\begin{array}{ll}
	\mf z(w), & w\in B^+\\[1ex]
	\overline{\mf z(\overline w)}, & w\in B^-
	\end{array}\right.\in C^1(B\setminus I,\mb C^2)\cap L_2(B,\mb C^2).
	\ee
Then there exists $\mf h\in L_\infty(B,\mb C^2)$ such that $\hat{\mf z}$ solves the equation
	\bee\label{g1.20}
	\int\limits_B\big(\langle\hat{\mf z},\bs\varphi_{\overline w}\rangle+|\hat{\mf z}|^2\langle\mf h,\bs\varphi\rangle\big)\,du\,dv=0\quad\mbox{for all}\ \bs\varphi\in C^0_c(B,\mb C^2)\cap H_2^1(B,\mb C^2).
	\ee
\end{lemma}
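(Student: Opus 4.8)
The plan is to derive a first-order elliptic system for $\mf z$ on $B^+$ from the $\mathcal H$-surface equation, reflect it across $I$ using Proposition~\ref{p1}, and absorb all lower-order terms into a single bounded coefficient $\mf h$ via Proposition~\ref{p2}. First I would compute $\mf z_{\overline w}$ in $B^+$. Since $\mf x\in C^2(B^+,\mb R^3)$ solves Rellich's system $\Delta\mf x=2\mathcal H(\mf x)\,\mf x_u\wedge\mf x_v$, and since $\Delta=4\partial_w\partial_{\overline w}$, we have $(x_w^j)_{\overline w}=\frac14\Delta x^j=\frac12\mathcal H(\mf x)(\mf x_u\wedge\mf x_v)^j$. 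Differentiating the definitions (\ref{g1.9}), the terms where $\partial_{\overline w}$ hits the coefficients $\psi_{p^k}(x^1,x^2)$, $\dot\gamma(x^1)$, $q(\mf x)$ produce factors of $\nabla\mf x$ times $\nabla\mf x$ (chain rule), hence are $O(|\nabla\mf x|^2)$; and the terms where $\partial_{\overline w}$ hits $x_w^j$ produce $\frac12\mathcal H(\mf x)$ times components of $\mf x_u\wedge\mf x_v$, which is again $O(|\nabla\mf x|^2)$. So on $B^+$ one obtains $|\mf z_{\overline w}|\le C|\nabla\mf x|^2\le C'|\mf z|^2$ pointwise, where the last inequality is exactly the left-hand estimate in Proposition~\ref{p2}. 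Writing $\mf z_{\overline w}=-|\mf z|^2\overline{\mf h}$ pointwise (setting $\mf h:=\mathbf 0$ where $\mf z$ vanishes) defines an $\mf h\in L_\infty(B^+,\mb C^2)$ with $\mf z_{\overline w}+|\mf z|^2\overline{\mf h}=0$ classically on $B^+$.

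Next I would extend $\mf h$ to $B^-$ by the same reflection rule as for $\hat{\mf z}$, namely $\mf h(w):=\overline{\mf h(\overline w)}$ for $w\in B^-$; this keeps $\|\mf h\|_{L_\infty(B)}=\|\mf h\|_{L_\infty(B^+)}<\infty$. A direct check shows that $\hat{\mf z}$ satisfies $\hat{\mf z}_{\overline w}+|\hat{\mf z}|^2\overline{\mf h}=0$ classically on $B^-$ as well: if $\mf z$ solves the equation on $B^+$ then $w\mapsto\overline{\mf z(\overline w)}$ solves the conjugated equation on the reflected domain, and because the reflection of the operator $\partial_{\overline w}$ is again (up to conjugation) $\partial_{\overline w}$, the reflected equation has precisely the same form with the reflected $\mf h$. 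Thus the equation (\ref{g1.20}) holds in the strong sense on $B\setminus I$, and integrating by parts against $\bs\varphi\in C^0_c(B,\mb C^2)\cap H^1_2$ on $B^+$ and on $B^-$ separately gives (\ref{g1.20}) up to a boundary term along $I$ of the form $\int_I\langle\,\cdot\,,\bs\varphi\rangle$ involving $\operatorname{Im}\mf z$ (the jump of $\hat{\mf z}$ across $I$ in the normal direction is carried entirely by $\operatorname{Im}\mf z$, since $\operatorname{Re}\hat{\mf z}$ is continuous across $I$).

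The main obstacle is handling this boundary integral rigorously, because $\mf z$ is only in $L_2(B^+)$ a priori — it need not have an $L_2$ trace on $I$ — so the boundary term is not literally an integral over $I$ but a limit of integrals over the lines $I_\varrho$ as $\varrho\to0$. The way around this is exactly Proposition~\ref{p1}: one first establishes (\ref{g1.20}) on the truncated domain $B\setminus\{|v|\le\varrho\}$ (or tests with $\bs\varphi$ multiplied by a cutoff in $v$ that vanishes near $I$), which is unproblematic since $\hat{\mf z}\in C^1$ there, and then lets $\varrho\to0$. The error introduced by the cutoff is controlled by $\int_{I_\varrho}|\langle\bs\lambda,\operatorname{Im}\mf z\rangle|\,du$ for suitable $\bs\lambda\in C^1_c(B^+\cup I,\mb R^2)$ built from $\bs\varphi$ (the two real components of $\operatorname{Im}\mf z$ pair against the relevant components of $\bs\varphi$), together with a term $\int_{I_\varrho}|\hat{\mf z}|^2|\bs\varphi|\,du\,dv$-type remainder that vanishes because $\int_{B^+}|\mf z|^2<\infty$ forces $\liminf_{\varrho\to0}\varrho\int_{I_\varrho}|\mf z|^2=0$ (Fubini). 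By Proposition~\ref{p1} the first $\liminf$ is zero, so passing to the limit kills the boundary contribution and yields (\ref{g1.20}) in the stated form. A minor point to be careful about is the density/approximation argument allowing general test functions $\bs\varphi\in C^0_c(B)\cap H^1_2$ rather than smooth ones, and the fact that $|\hat{\mf z}|^2\in L_1(B)$ makes the second integral in (\ref{g1.20}) well-defined for such $\bs\varphi$ (using $\bs\varphi\in L_\infty$ since it is continuous with compact support).
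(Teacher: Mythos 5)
Your proposal is correct and follows essentially the same route as the paper: derive the pointwise bound $|\hat{\mf z}_{\overline w}|\le c|\nabla\mf x|^2\le c'|\hat{\mf z}|^2$ from Rellich's system plus Proposition~\ref{p2}, define $\mf h$ by division, and pass to the weak form on truncated domains so that Proposition~\ref{p1} kills the boundary contribution along $I_\varrho$ (the paper phrases the reflection via the trivially reflected $\hat{\mf x}$ and conjugated coefficient matrices rather than by reflecting $\mf h$, but this is the same computation). Your sign/conjugation convention $\mf z_{\overline w}=-|\mf z|^2\overline{\mf h}$ differs cosmetically from the one forced by (\ref{g1.20}), namely $\hat{\mf z}_{\overline w}=|\hat{\mf z}|^2\mf h$, but since only existence of a bounded $\mf h$ is claimed this is immaterial.
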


\begin{proof}
The assertion follows from the estimate
	\bee\label{g1.21}
	|\hat{\mf z}_{\overline w}|\le c|\hat{\mf z}|^2\quad\mbox{on}\ B\setminus I,
	\ee
which we will prove below. Indeed, defining
	$$\mf h(w):=\left\{\begin{array}{ll}
	|\hat{\mf z}(w)|^{-2}\hat{\mf z}_{\overline w}, & \mbox{for}\ w\in B\setminus I\ \mbox{with}\ |\hat{\mf z}(w)|\not=0\\[1ex]
	0, & \mbox{otherwise}\end{array}\right.\in L_\infty(B,\mb C^2),$$
we infer $\hat{\mf z}_{\overline w}(w)=|\hat{\mf z}(w)|^2\mf h(w)$ away from isolated points in $B\setminus I$, because points $w\in B^+$ with $|\mf z(w)|=0$ are exactly the isolated branch points of $\mf x$. If we multiply this relation with an arbitrary $\bs\varphi\in C^1_c(B,\mb C^2)$, integrate over $B_{(\varrho)}^\pm:=\{w\in B^\pm\,:\ \pm v>\varrho\}$ and apply Gauss' integral theorem as well as the boundary condition, Proposition\,\ref{p1}, we arive at (\ref{g1.20}) for such $\bs\varphi$. By a standard approximation argument we can also allow $\bs\varphi\in C^0_c(B,\mb C^2)\cap H_2^1(B,\mb C^2)$ in (\ref{g1.20}).

In showing (\ref{g1.21}), the proof will be completed. To this end, we reflect $\mf x$ trivially across $I$,
	\bee\label{g1.22}
	\hat{\mf x}(w):=\left\{\begin{array}{ll}
	\mf x(w),& w\in B^+\cup I\\[1ex]
	\mf x(\overline w),& w\in B^-
	\end{array}\right..
	\ee
Defining $\mf A,\mf b\in C^1(\overline{\ml B_r})$ by (\ref{g1.16}) and having (\ref{g1.15}) in mind, we now may write $\hat{\mf z}$ as 
	\bee\label{g1.23}
	\hat{\mf z}=\mf A(\hat{\mf x})\cdot \begin{pmatrix} \hat x_w^1\\[0.5ex] \hat x_w^3 \end{pmatrix} +\mf b(\hat{\mf x})\,\hat x_w^2\quad\mbox{on}\ B^+
	\ee
and as
	\bee\label{g1.24}
	\hat{\mf z}=\overline{\mf A(\hat{\mf x})}\cdot \begin{pmatrix} \hat x_w^1\\[0.5ex] \hat x_w^3 \end{pmatrix} +\overline{\mf b(\hat{\mf x})}\,\hat x_w^2\quad\mbox{on}\ B^-.
	\ee
On the other hand, Rellich's system in (\ref{g1.1}) can be written as
	\bee\label{g1.25}
	\hat{\mf x}_{w\overline w}=\pm i\ml H(\hat{\mf x})\hat{\mf x}_{\overline w}\wedge\hat{\mf x}_w\quad\mbox{on}\ B^\pm.
	\ee
Differentiating (\ref{g1.23}), (\ref{g1.24}) and applying (\ref{g1.25}), we obtain
	$$|\hat{\mf z}_{\overline w}|\le c|\nabla\hat{\mf x}|^2\quad\mbox{on}\ B\setminus I$$
with some constant $c>0$. Hence, Proposition\,\ref{p2} yields the asserted relation (\ref{g1.21}).	
\end{proof}

Now the crucial step in the proof of Theorem\,\ref{t1} is the following

\begin{lemma}\label{l2}
For any $\mu\in(0,1)$, the mapping $\hat{\mf z}$ defined in Lemma\,\ref{l1} can be extended to a mapping of class $C^\mu(B,\mb C^2)$ with the property $\mbox{Im}\,\hat{\mf z}=\bs0$ on $I$.
\end{lemma}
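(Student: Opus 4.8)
The plan is to treat equation \eqref{g1.20} of Lemma\,\ref{l1} as a perturbed Cauchy--Riemann system and to run a standard elliptic regularity/Morrey-space argument, using the fact that $\hat{\mf z}\in L_2(B)$ and that the coefficient function $\mf h$ is merely bounded, so that $|\hat{\mf z}|^2\langle\mf h,\bs\varphi\rangle$ is an $L_1$ inhomogeneity. First I would localize: fix an interior disc $B_\rho(w_1)\subset B$ and split $\hat{\mf z}=\mf g+\mf k$, where $\mf g$ is holomorphic (the solution of $\mf g_{\overline w}=0$ with the same boundary trace on $\partial B_\rho(w_1)$) and $\mf k$ is the ``Newtonian-type'' part solving $\mf k_{\overline w}=-|\hat{\mf z}|^2\mf h$ with zero boundary data, represented by the Cauchy transform $\mf k(w)=-\frac1\pi\int_{B_\rho(w_1)}\frac{|\hat{\mf z}(\zeta)|^2\mf h(\zeta)}{w-\zeta}\,d\xi\,d\eta$. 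Since $|\hat{\mf z}|^2\mf h\in L_1$, the Cauchy transform is already in $L_q$ for every $q<2$, and a Morrey-type bootstrap on the growth of $\int_{B_\sigma}|\hat{\mf z}|^2$ will upgrade the local integrability of $\hat{\mf z}$ past $L_2$, eventually into $L_p$ for all $p<\infty$ and then, via the boundedness of the Cauchy transform $L_p\to C^{1-2/p}$ for $p>2$, into $C^\mu_{\mathrm{loc}}(B\setminus I)$ for every $\mu\in(0,1)$.

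The new ingredient here is the behaviour at the diameter $I$, and this is where the weak boundary condition \eqref{g1.10} enters. The right viewpoint is that $\hat{\mf z}$ is the Schwarz reflection of $\mf z$ across $I$ in the sense of the \emph{imaginary part}: \eqref{g1.19} is exactly the reflection that makes $\mathrm{Im}\,\hat{\mf z}$ odd and $\mathrm{Re}\,\hat{\mf z}$ even across $I$, and \eqref{g1.10} says that the distributional obstruction to $\hat{\mf z}$ solving \eqref{g1.20} \emph{globally across $I$} vanishes. So I would first verify carefully that \eqref{g1.20} really does hold for test functions $\bs\varphi$ supported across $I$ — this is asserted in Lemma\,\ref{l1}, using Proposition\,\ref{p1} — and then observe that once \eqref{g1.20} holds on all of $B$ with $\mf h\in L_\infty$, the interior argument of the previous paragraph applies \emph{verbatim at points of $I$ as well}, because the equation no longer distinguishes $I$ from any other line. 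Thus $\hat{\mf z}$ extends to a function in $C^\mu(B,\mb C^2)$ for every $\mu\in(0,1)$. Finally, the property $\mathrm{Im}\,\hat{\mf z}=\bs0$ on $I$ follows from the reflection symmetry: by \eqref{g1.19} we have $\overline{\hat{\mf z}(\overline w)}=\hat{\mf z}(w)$, hence $\mathrm{Re}\,\hat{\mf z}$ is even and $\mathrm{Im}\,\hat{\mf z}$ is odd under $w\mapsto\overline w$, and continuity up to $I$ then forces $\mathrm{Im}\,\hat{\mf z}=\bs0$ there.

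I would organize the bootstrap as a Morrey/Campanato lemma: from \eqref{g1.20}, for any ball $B_\sigma(w_1)\subset B$ one gets, by the splitting above and Hölder,
	\bee\label{gaux}
	\int_{B_{\sigma/2}(w_1)}|\hat{\mf z}|^2\,du\,dv\le C\Big(\frac{\sigma}{\tau}\Big)^{2}\int_{B_\tau(w_1)}|\hat{\mf z}|^2\,du\,dv+C\Big(\int_{B_\tau(w_1)}|\hat{\mf z}|^2\,du\,dv\Big)^{2}
	\ee
for $0<\sigma<\tau$, where the quadratic term comes from estimating the Cauchy transform of $|\hat{\mf z}|^2\mf h$. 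A standard iteration lemma (e.g.\ Giaquinta's hole-filling lemma) then yields, once the total mass $\int_B|\hat{\mf z}|^2$ is small enough on the localized domain — which we may assume after scaling $r$ down, as announced after \eqref{g1.8} — a Morrey estimate $\int_{B_\sigma(w_1)}|\hat{\mf z}|^2\le C\sigma^{2\lambda}$ for some $\lambda>0$, and by iterating the improvement in $\lambda$ one reaches every $\lambda<1$. Hölder continuity of exponent $\mu<1$ is then immediate from the Cauchy-transform representation together with this Morrey bound. The main obstacle I anticipate is not the interior regularity machinery, which is routine, but making precise that the weak boundary condition \eqref{g1.10} is exactly the statement needed to promote \eqref{g1.20} from $B^+\cup B^-$ to all of $B$, i.e.\ that the jump of $\mathrm{Im}\,\hat{\mf z}$ across $I$ contributes no singular measure to the equation; one has to be a little careful because $\hat{\mf z}$ a priori has only one-sided $C^1$ regularity and $L_2$ control, so the trace on $I$ is only in the weak $\liminf$ sense of \eqref{g1.10}, and that is precisely enough to kill the boundary term when integrating by parts over $B^\pm_{(\varrho)}$ and letting $\varrho\to0$.
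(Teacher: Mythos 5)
Your overall architecture---reflect, check that \eqref{g1.20} holds for test functions supported across $I$, derive a Morrey decay for $\int_{B_\sigma}|\hat{\mf z}|^2$, and finish with the Cauchy--Vekua representation plus potential estimates and the symmetry argument for $\mbox{Im}\,\hat{\mf z}=\bs0$ on $I$---matches the paper's, and your endgame is sound. The gap is in the step you treat as routine: obtaining the initial Morrey decay. The quadratic term $C\big(\int_{B_\tau}|\hat{\mf z}|^2\big)^2$ in your hole-filling inequality is meant to bound $\int_{B_{\sigma/2}}|\mf k|^2$, where $\mf k$ is the Cauchy transform of the $L_1$ function $|\hat{\mf z}|^2\mf h$. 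But the Cauchy transform does \emph{not} map $L_1$ into $L_2$: the kernel $|\zeta-w|^{-1}$ is not square integrable in two dimensions near its singularity, so one only gets $L_q$ for $q<2$ (or weak $L_2$), and running the iteration in $L_q$ does not close because the source term still carries the $L_2$ norm of $\hat{\mf z}$. This is the classical obstruction for systems with natural (quadratic) growth: from $\hat{\mf z}\in L_2$ and $|\hat{\mf z}_{\overline w}|\le c|\hat{\mf z}|^2$ alone no regularity follows, with or without smallness of the Dirichlet integral---the function $z(w)=w^{-1}\big(\log\frac1{|w|}\big)^{-1}$ is in $L_2$ near $0$, satisfies $|z_{\overline w}|=\frac12|z|^2$ there, and is unbounded, and shrinking the domain makes its $L_2$ norm arbitrarily small.

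The paper circumvents this by not testing \eqref{g1.20} with arbitrary $\bs\varphi$ but with test functions manufactured from $\hat{\mf x}$ itself (from $\hat x^1$ and $\chi=\pm(\hat x^3-\psi(\hat x^1,\hat x^2))$, corrected by a comparison harmonic vector). Two structural facts then carry the argument: (i) the algebraic form of $\mf A,\mf b$ in \eqref{g1.16}, the conformality relations, and the smallness \eqref{g1.8+} of $q$ make $\langle\hat{\mf z},\bs\varphi_{\overline w}\rangle$ coercive, controlling $|\nabla\hat{\mf x}|^2$ from below; and (ii) the dangerous term is $\int|\hat{\mf z}|^2|\mf h||\bs\varphi|$, which is absorbed because $\sup|\bs\varphi|\le d(r)\to0$---a smallness in the \emph{supremum} of the test function coming from the assumed continuity of $\mf x$ and the localization, not an integral smallness of $\hat{\mf z}$. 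This yields \eqref{g1.26+}, hence $\hat{\mf x}\in C^\beta$ by Morrey's Dirichlet growth theorem; a second round of testing with $\gamma\bs\psi$ gives the weighted estimate \eqref{g1.27} for $\alpha<2\beta$, which is pushed to $\alpha=1$ by induction \emph{before} the Vekua representation and Schmidt's inequality are invoked. To repair your scheme you must replace the $L_1$-based Cauchy-transform bound by this variational absorption (or by a Wente/Hardy-space duality estimate, a genuinely different and heavier tool); as written, the decay estimate does not follow.
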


\begin{proof}
We attempt to recover the steps in Section\,3 of \cite{mueller3}, which were used there to prove an analogue result, namely Lemma\,3.4.
\begin{enumerate}
\item
At first, we prove $\hat{\mf x}\in C^\beta(B,\mb R^3)$ for some $\beta\in(0,1)$. To this end, we consider the function
	\bee\label{g1.26}
	\chi:=\left\{\begin{array}{ll}
	\hat x^3-\psi(\hat x^1,\hat x^2) & \mbox{on}\ B^+\cup I\\[1ex]
	-\hat x^3+\psi(\hat x^1,\hat x^2) & \mbox{on}\ B^-
	\end{array}\right..
	\ee
Note that $\chi\in C^0(B)\cap H_2^1(B)$ is satisfied according to the boundary condition (\ref{g1.2}). Choose any disc $B_\varrho(w_0)\subset\subset B$ and define 
$\mf y=(y^1,y^2)\in C^\infty(B_\varrho(w_0),\mb R^2)\cap C^0(\overline{B_\varrho(w_0)},\mb R^2)$ as harmonic vector with boundary values
	$$y^1=\hat x^1,\quad y^2=\chi\quad\mbox{on}\ \partial B_\varrho(w_0).$$
Setting
	$$\bs\varphi:=\begin{pmatrix} -i(\chi-y^2) \\[0.5ex] \hat x^1-y^1 \end{pmatrix}\quad\mbox{on}\ \overline{B_\varrho(w_0)},\qquad\bs\varphi:=\mf0\quad\mbox{on}\ B\setminus\overline{B_\varrho(w_0)},$$
we obtain an admissible test function $\bs\varphi\in C_c^0(B,\mb C^2)\cap H_2^1(B,\mb C^2)$ for (\ref{g1.20}). We now insert $\bs\varphi$ and the relations (\ref{g1.23}), (\ref{g1.24}) for $\hat{\mf z}$ into (\ref{g1.20}) and use the special form (\ref{g1.16}) of $\mf A$ and $\mf b$. Writing $\bs\xi:=(\hat x^1,\hat x^3)$, we then find
	\begin{eqnarray*}
	&& \hspace*{-13ex}(1-d(r))\int\limits_{B_\varrho(w_0)}|\bs\xi_w|^2\,du\,dv \\
	&& \le\ (q_0+d(r))\int\limits_{B_\varrho(w_0)}|\bs\xi_w|\,|\hat x_w^2|\,du\,dv\\
	&& \hspace{3ex} +c\int\limits_{B_\varrho(w_0)}|\mf y_w|\,|\hat{\mf x}_w|\,du\,dv +\int\limits_{B_\varrho(w_0)}|\hat{\mf z}|^2|\mf h|\,|\bs\varphi|\,du\,dv,
	\end{eqnarray*}
where $c>0$ is a constant and $d(r)$, $0<r\ll 1$, denotes some (possibly varying) positive function satisfying $d(r)\to 0\,(r\to0+)$. By our general assumption (\ref{g1.8}), the maximum principle, and the normalization $\psi(0,0)=0$ we further get $|\bs\varphi|\le d(r)$. Using the conformality relations as well as Proposition\,\ref{p2} we hence conclude
	$$(1-q_0-d(r))\int\limits_{B_\varrho(w_0)}|\hat{\mf x}_w|^2\,du\,dv\le c\int\limits_{B_\varrho(w_0)}|\mf y_w|\,|\hat{\mf x}_w|\,du\,dv.$$
Applying the inequality of Cauchy-Schwarz and assuming $d(r)\le\frac12(1-q_0)$, we finally arrive at
	\bee\label{g1.26+}
	\int\limits_{B_\varrho(w_0)}|\nabla\hat{\mf x}|^2\,du\,dv\le c\int\limits_{B_\varrho(w_0)}|\nabla\mf y|^2\,du\,dv\quad\mbox{for all discs}\ B_\varrho(w_0)\subset\subset B.
	\ee
Note that there is a constant $c>0$ with
	$$c^{-1}|\nabla\hat{\mf x}|\le |\nabla(\hat x^1,\chi)| \le c |\nabla\hat{\mf x}|\quad\mbox{on}\ B,$$
due to the conformality relations and the condition $\nabla\psi(0,0)=0$. Employing C.\,B.\,Morrey's Dirichlet growth theorem, we hence infer $\hat{\mf x}\in C^\beta(B,\mb R^3)$ for some $\beta\in(0,1)$ from (\ref{g1.26+}).
\item
Next we show: For any $\alpha\in[0,2\beta)$ and any compact subset $K\subset B$ we have
	\bee\label{g1.27}
	\int\limits_B|w-w_0|^{-\alpha}|\hat{\mf z}(w)|^2\,du\,dv\le c\quad\mbox{for all}\ w_0\in K,
	\ee
where $c>0$ denotes a constant depending on $\alpha$ and $K$. 

We fix some $w_0\in K$ and define $\chi$ as in (\ref{g1.26}). We consider
	$$\bs\psi(w):=\begin{pmatrix} -i(\chi(w)-\chi(w_0))\\[0.5ex] \hat x^1(w)-\hat x^1(w_0) \end{pmatrix},\quad w\in B.$$
According to part\,1 of the proof we have $\chi,\hat x^1\in C^\beta(B)$ and conclude
	\bee\label{g1.28}
	|\bs\psi(w)|\le c|w-w_0|^\beta,\quad w\in K.
	\ee
Moreover, we can estimate (remember $\bs\xi=(\hat x^1,\hat x^3)$)
	\bee\label{g1.29}
	\begin{array}{rcl}
	\langle \hat{\mf z},\bs\psi_{\overline w}\rangle & \ge & |\bs\xi_w|^2-d(r)|\hat{\mf x}_w|^2-(q_0+d(r))|\bs\xi_w||\hat x_w^2|\\[1ex]
	& \ge & (1-q_0-d(r))|\bs\xi_w|^2\ \ge\ c(1-q_0-d(r))|\hat{\mf z}|^2\quad\mbox{in}\ B,
	\end{array}
	\ee
where we retained the notation of part\,1 and used Proposition\,\ref{p2}.

Now we choose some $\delta\in(0,\delta_0)$, $\delta_0:=\frac12\mbox{dist}(K,\partial B)$, and set
	$$\gamma(w):=\left\{\begin{array}{ll}
	\delta^{-\alpha}-\delta_0^{-\alpha},& 0\le|w-w_0|<\delta\\[1ex]
	|w-w_0|^{-\alpha}-\delta_0^{-\alpha},& \delta\le|w-w_0|<\delta_0\\[1ex]
	0,& \delta_0\le|w-w_0|\end{array}\right..$$
Then $\bs\phi:=\gamma\bs\psi\in C^0_c(B,\mb C^2)\cap H_2^1(B,\mb C^2)$ is admissible in (\ref{g1.20}) and relations (\ref{g1.28}), (\ref{g1.29}) as well as $|\langle \mf h,\bs\psi\rangle|\le d(r)$ yield
	\bee\label{g1.30}
	c(1-q_0-d(r))\int\limits_B\gamma|\hat{\mf z}|^2\,du\,dv\le c\int\limits_{\delta<|w-w_0|<\delta_0}|w-w_0|^{-\alpha-1+\beta}|\hat{\mf z}|\,du\,dv.
	\ee
We assume $d(r)\le\frac12(1-q_0)$ and apply the inequalities
	$$\int\limits_B\gamma|\hat{\mf z}|^2\,du\,dv\ge\int\limits_{\delta<|w-w_0|<\delta_0}|w-w_0|^{-\alpha}|\hat{\mf z}|^2\,du\,dv-\delta_0^{-\alpha}\int\limits_B|\mf{\hat z}|^2\,du\,dv$$
and
	\begin{eqnarray*}
	&& \hspace*{-7ex}\int\limits_{\delta<|w-w_0|<\delta_0}\hspace{-3ex}|w-w_0|^{-\alpha-1+\beta}|\hat{\mf z}|\,du\,dv
	\ \le\  \frac{\varepsilon}2\int\limits_{\delta<|w-w_0|<\delta_0}\hspace{-3ex}|w-w_0|^{-\alpha}|\hat{\mf z}|^2\,du\,dv\\[1ex]
	&& \hspace*{31.5ex} +\frac1{2\varepsilon}\int\limits_{\delta<|w-w_0|<\delta_0}\hspace{-3ex}|w-w_0|^{-\alpha-2+2\beta}\,du\,dv
	\end{eqnarray*}
with sufficiently small $\varepsilon>0$ to (\ref{g1.30}). Having $\int_B|\mf{\hat z}|^2\,du\,dv<+\infty$ as well as $2\beta>\alpha$ in mind, we arrive at 
	$$\int\limits_{\delta<|w-w_0|<\delta_0}\hspace{-3ex}|w-w_0|^{-\alpha}|\hat{\mf z}|^2\,du\,dv\le c$$
with some constant $c>0$ which is independent of $w_0\in K$ and $\delta\in(0,\delta_0)$. For $\delta\to0+$ we obtain the asserted estimate (\ref{g1.27}).
\item
Finally, it turns out that (\ref{g1.27}) is valid for $\alpha=1$. This can be proved exactly as in \cite{mueller3} Proposition\,3.3 via an induction argument using the representation formula of Pompeiu and Vekua, namely
	\bee\label{g1.31}
	\hat{\mf z}(w)=\mf y(w)-\frac1\pi\int\limits_B\frac{|\hat{\mf z}(\zeta)|^2\mf h(\zeta)}{\zeta-w}\,d\xi\,d\eta,\quad w\in B;\quad\zeta=\xi+i\eta,
	\ee
with some holomorphic vector $\mf y:B\to\mb C^2$. Hence $\hat{\mf z}$ is locally bounded in $B$. By applying E.\,Schmidt's inequality (see e.g.~\cite{dht} pp.~219--221) to a local version of (\ref{g1.31}), we conclude $\hat{\mf z}\in C^\mu(B,\mb C^2)$ for any $\mu\in(0,1)$, as asserted. The property $\mbox{Im}(\hat{\mf z})=\bs0$ on $I$ is now an immediate consequence of Proposition\,\ref{p1}.

\vspace{-4.5ex}
\end{enumerate}
\end{proof}

As the last preliminaries towards the proof of Theorem\,\ref{t1} we need two further lemmata; the first one is due to E.\,Heinz, S.\,Hildebrandt, and J.C.C.\,Nitsche and we present it in a special appropriate form:

\begin{lemma}{\bf(Heinz--Hildebrandt--Nitsche)}\label{l3}
\begin{itemize}
\item[(a)]
Let $f\in C^0(B^+,\mb C)$ be given such that its square $f^2$ has a continuous extension to $B^+\cup I$. Then $f$ can be extended to a continuous function $f\in C^0(B^+\cup I,\mb C)$. 
\item[(b)]
Let $f\in C^0([-\varrho_0,\varrho_0],\mb C)$ be given with some $\varrho_0\in(0,1)$. Suppose that $\mbox{Re}(f)\cdot\mbox{Im}(f)=0$ on $[-\varrho_0,\varrho_0]$ is satisfied and that there exist numbers $c>0$, $\alpha\in(0,1]$ with
	\bee\label{g1.32}
	|f^2(u_1)-f^2(u_2)|\le c|u_1-u_2|^{2\alpha}\quad\mbox{for all}\ u_1,u_2\in[-\varrho_0,\varrho_0].
	\ee
Then we have $f\in C^{\alpha}([-\varrho_0,\varrho_0],\mb C)$.
\end{itemize}
\end{lemma}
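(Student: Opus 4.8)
\textbf{Proof plan for Lemma~\ref{l3}.}

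The plan is to prove part (a) first, since part (b) rests on the same circle of ideas but now on the interval. For (a), fix $w_0\in I$ and work in a small half-disc $B_\varrho^+(w_0)\subset B^+$ on which the continuous extension $g$ of $f^2$ is defined. The only issue is choosing a consistent square root; away from the zeros of $g$ this is a standard monodromy argument, so the delicate case is a point $w_0$ with $g(w_0)=0$. If $g$ vanishes identically near $w_0$ there is nothing to prove, so assume $g\not\equiv 0$. The idea is to write, for $w$ in a punctured half-neighbourhood where $g\neq 0$, $f(w)=\pm\sqrt{|g(w)|}\,e^{i\theta(w)/2}$ with $\theta$ a continuous branch of $\arg g$; one shows $|f(w)|=\sqrt{|g(w)|}\to 0$ as $w\to w_0$, and since $f$ is already assumed continuous on $B^+$ (only the extension to $I$ is in question), the value must be assigned as $f(w_0):=0$ and $|f(w)-f(w_0)|=\sqrt{|g(w)|}\to 0$ gives continuity at $w_0$ regardless of the phase. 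The case $g(w_0)\neq 0$ is routine: choose a holomorphic (really just continuous) square root of $g$ on a neighbourhood of $w_0$ in $B^+\cup I$ on which $g\neq 0$, match its sign with $f$ on the nonempty open set $B_\varrho^+(w_0)$ where $f$ is continuous and nonvanishing, and extend.

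For part (b), the function $f$ is already continuous on $[-\varrho_0,\varrho_0]$, so only the H\"older estimate must be established. The hypothesis $\operatorname{Re}(f)\cdot\operatorname{Im}(f)\equiv 0$ means that at each point $f$ is either purely real or purely imaginary; by continuity the set $\{f\text{ real}\}$ and $\{f\text{ imaginary}\}$ are relatively closed and their intersection is the zero set $Z$ of $f$. On any interval component of the complement of $Z$, $f$ is entirely real or entirely imaginary, so $f^2=\pm|f|^2$ there and hence $|f(u_1)-f(u_2)|^2\le |f^2(u_1)-f^2(u_2)|\le c|u_1-u_2|^{2\alpha}$, which gives the desired bound $|f(u_1)-f(u_2)|\le \sqrt{c}\,|u_1-u_2|^\alpha$ for $u_1,u_2$ in the same component. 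The main obstacle is the mixed case: $u_1$ and $u_2$ lie in different components, or one of them lies in $Z$. For that, I would use that $f$ is small near $Z$: if $f(u_*)=0$ for some $u_*$ between $u_1$ and $u_2$, then $|f(u_j)|^2=|f^2(u_j)-f^2(u_*)|\le c|u_j-u_*|^{2\alpha}\le c|u_1-u_2|^{2\alpha}$, so $|f(u_1)-f(u_2)|\le |f(u_1)|+|f(u_2)|\le 2\sqrt{c}\,|u_1-u_2|^\alpha$. The remaining subcase — $u_1,u_2$ in adjacent or non-adjacent components with no zero strictly between them that is easy to exploit — is handled by noting that the boundary of each component consists of points of $Z$: pick $u_*\in Z$ with $\min(u_1,u_2)\le u_*\le\max(u_1,u_2)$ (such a point exists the moment $u_1,u_2$ are not in the same component, since the components are open intervals and any point of $[-\varrho_0,\varrho_0]$ not in some component lies in $Z$), and reduce to the previous estimate. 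Collecting the constant, $f\in C^\alpha([-\varrho_0,\varrho_0],\mb C)$.

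I expect the only genuinely subtle point to be the bookkeeping in part (b) when neither endpoint lies in $Z$ but they belong to different components: one must be sure that some zero of $f$ separates them, which follows because $[-\varrho_0,\varrho_0]\setminus Z$ is a disjoint union of open intervals and a path from $u_1$ to $u_2$ must cross $Z$ to pass from one to another. Everything else is the Cauchy–Schwarz-type inequality $|a-b|^2\le|a^2-b^2|$ valid when $a,b$ are both real or both purely imaginary, combined with the triangle inequality near zeros; no estimate is deeper than that.
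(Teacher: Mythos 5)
Your proposal is essentially correct, and it is worth noting that the paper itself does not prove Lemma~\ref{l3} at all: its ``proof'' is a citation of Lemmata~3 and~4 in \cite{dht}, Section~2.7, which are established there by the same circle of elementary ideas (a local square-root/monodromy analysis for (a), and a decomposition of $[-\varrho_0,\varrho_0]$ by the zero set of $f$ for (b)). So you have supplied a self-contained reconstruction of the cited argument rather than a genuinely new route. One statement in your write-up of (b) must be repaired: the inequality $|a-b|^2\le|a^2-b^2|$ is \emph{not} valid for arbitrary real (or arbitrary purely imaginary) $a,b$ --- since $|a^2-b^2|=|a-b|\,|a+b|$, it is equivalent to $ab\ge0$ and fails e.g.\ for $a=1$, $b=-1$. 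It does hold where you apply it, because on a connected component of $[-\varrho_0,\varrho_0]\setminus Z$ the function $f$ is continuous, nonvanishing and entirely real (or entirely imaginary), hence of constant sign by the intermediate value theorem; but that sentence needs to be added, as your closing remark asserts the inequality in a false generality. With this fixed, your two-case analysis (same component versus existence of a zero $u_*\in[u_1,u_2]$, the latter covering endpoints in $Z$ and endpoints in distinct components) is complete and gives the H\"older bound with constant $2\sqrt c$. In part (a) the only step left implicit is that once $\lim_{B^+\ni w\to w_0}f(w)$ exists for every $w_0\in I$, the pointwise extension is automatically continuous on $B^+\cup I$; this follows from a routine diagonal argument and should be mentioned, but it is not a gap.
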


\begin{proof}
We refer to the Lemmata 3 and 4 in \cite{dht} Section\,2.7.
\end{proof}

The second of the announced lemmata contains a regularity result for generalized analytic functions, which we may attribute to I.N.\,Vekua; for the sake of completeness, we give the proof of a local version needed here:

\begin{lemma}{\bf(Vekua)}\label{l4}
Let $z\in C^1(B^+,\mb C)\cap C^0(B^+\cup I,\mb C)$ be a solution of 
	\bee\label{g0.1}
	z_{\overline w}=g\quad\mbox{in}\ B^+,\qquad \mbox{Im}\,z=h\quad\mbox{on}\ [-\varrho_0,\varrho_0]
	\ee
for some $\varrho_0\in(0,1)$. Then there hold:
\begin{itemize}
\item[(a)]
If $g\in C^0(B^+\cup I,\mb C)$ and $h\in C^\alpha([-\varrho_0,\varrho_0])$ for some $\alpha\in(0,1)$, then we have $z\in C^\alpha(\overline{B_\varrho^+(0)},\mb C)$ for any $\varrho\in(0,\varrho_0)$.
\item[(b)]
If $g\in C^\alpha(B^+\cup I,\mb C)$ and $h\in C^{1,\alpha}([-\varrho,\varrho])$ for some $\alpha\in(0,1)$, then we have $z\in C^{1,\alpha}(\overline{B_\varrho^+(0)},\mb C)$ for any $\varrho\in(0,\varrho_0)$.
\end{itemize}
\end{lemma}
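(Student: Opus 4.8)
The plan is to strip the boundary datum $h$ and the inhomogeneity $g$ out of (\ref{g0.1}) in three steps, each effected by a classical integral operator with known boundary regularity, until all that remains is a holomorphic function with vanishing imaginary part on $I$, which then extends analytically across $I$ by the Schwarz reflection principle.

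Fix $\varrho\in(0,\varrho_0)$ and auxiliary radii $\varrho<\sigma_2<\sigma_1<\sigma_0<\varrho_0$. \textbf{Step 1 (removing $h$).} With a cut-off function $\theta\in C^\infty_c((-\varrho_0,\varrho_0))$, $\theta\equiv1$ on $[-\sigma_0,\sigma_0]$, set $\phi(w):=\frac1\pi\int_{-\varrho_0}^{\varrho_0}\frac{\theta(u)h(u)}{u-w}\,du$. Then $\phi$ is holomorphic off $[-\varrho_0,\varrho_0]$, in particular on $B^+_{\sigma_0}(0)$; from the Plemelj--Sokhotski formulas together with the symmetry $\phi(\overline w)=\overline{\phi(w)}$ one obtains $\mbox{Im}\,\phi=h$ on $(-\sigma_0,\sigma_0)$; and Privalov's theorem for the Cauchy integral yields $\phi\in C^\alpha$ up to $I$ in case (a), while in case (b), after the integration by parts $\phi'(w)=\frac1\pi\int\frac{(\theta h)'(u)}{u-w}\,du$, it yields $\phi\in C^{1,\alpha}$ up to $I$. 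Replacing $z$ by $z_1:=z-\phi$ --- which still satisfies $z_{1,\overline w}=g$ --- we are reduced to $\mbox{Im}\,z_1=0$ on $(-\sigma_0,\sigma_0)$, at the price of an additive term of exactly the regularity asserted for $z$.

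\textbf{Step 2 (removing $g$).} Extend $g|_{\overline{B^+_{\sigma_1}(0)}}$ to a compactly supported $G\in C^0(\mb C)$ in case (a), resp.~$G\in C^\alpha(\mb C)$ in case (b), and set $v(w):=-\frac1\pi\int_{\mb C}\frac{G(\zeta)}{\zeta-w}\,d\xi\,d\eta$. This Pompeiu--Vekua potential satisfies $v_{\overline w}=G$ (distributionally, resp.~classically) and lies in $C^{0,\beta}_{\mathrm{loc}}(\mb C)$ for every $\beta<1$ in case (a), resp.~in $C^{1,\alpha}_{\mathrm{loc}}(\mb C)$ in case (b). Hence $(z_1-v)_{\overline w}=0$ in $B^+_{\sigma_1}(0)$, so $z_1-v$ is holomorphic there by Weyl's lemma; it is continuous up to $I$ with $\mbox{Im}(z_1-v)=-\mbox{Im}\,v=:h_1$ on $(-\sigma_1,\sigma_1)$, and $h_1$ inherits the regularity of $v$. \textbf{Step 3.} Applying Step 1 once more with $h_1$ in place of $h$ produces a Cauchy integral $\tilde\phi$ (of the corresponding regularity) with $\mbox{Im}\,\tilde\phi=h_1$ on $(-\sigma_2,\sigma_2)$, so that $F:=z_1-v-\tilde\phi$ is holomorphic in $B^+_{\sigma_2}(0)$, continuous up to $I$, and satisfies $\mbox{Im}\,F=0$ on $(-\sigma_2,\sigma_2)$.

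Now the Schwarz reflection principle extends $F$ to a holomorphic, hence $C^\infty$, function on $B_{\sigma_2}(0)$. Unwinding the substitutions, $z=F+v+\tilde\phi+\phi$ on $B^+_\varrho(0)$: in case (a) each summand belongs to $C^\alpha(\overline{B^+_\varrho(0)})$ --- the binding one being $\phi$ --- and in case (b) each belongs to $C^{1,\alpha}(\overline{B^+_\varrho(0)})$, which is the claim. I expect the only genuine work to be hidden in the two cited boundary estimates --- Privalov's theorem for the Cauchy integral near the cut-off support (this is precisely why a cut-off is used rather than a crude extension of $h$ by zero, which would introduce logarithmic singularities at $\pm\varrho_0$) and the Schauder-type mapping property $f\mapsto -\frac1\pi\int\frac{f}{\zeta-w}$ from $C^\alpha$ into $C^{1,\alpha}$ --- together with the routine appeal to Weyl's lemma needed to justify the representation in Step 2 when $g$ is merely continuous.
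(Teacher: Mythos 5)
Your argument is correct, and it reaches the conclusion by a genuinely different route than the paper. The paper transplants the problem to the full unit disc: it multiplies $z$ by a cut-off, maps a smooth simply connected subdomain of $B^+_{\varrho_0}(0)$ conformally onto $B$ (invoking the Kellogg--Warschawski theorem to preserve H\"older classes), and then quotes the explicit representation formula for the Riemann--Hilbert problem $\tilde z_{\overline w}=\tilde g$, $\mbox{Im}\,\tilde z=\tilde h$ on $\partial B$ in terms of the Vekua operator $T[\tilde g](w)$ and its reflected companion $\overline{T[\tilde g](1/\overline w)}$, reading off the regularity from Privalov's theorem and the known mapping properties of $T$. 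You instead stay in the half-disc and successively subtract three explicit potentials -- a Cauchy integral over $I$ killing $h$, a Pompeiu--Vekua potential killing $g$, and a second Cauchy integral killing the boundary trace reintroduced by the second potential -- until Schwarz reflection applies to the holomorphic remainder. Both proofs ultimately rest on the same two classical facts (the Plemelj--Privalov theorem for Cauchy integrals of H\"older data, which is exactly why your cut-off of $h$ is needed to avoid the endpoint logarithms, and the $L_\infty\to C^\beta$ resp. $C^\alpha\to C^{1,\alpha}$ mapping properties of the Cauchy transform from Vekua's book). What your version buys is self-containedness: no conformal map, no Kellogg--Warschawski, and no appeal to the global solvability theory of the Riemann--Hilbert problem on the disc; the price is the extension of $g$ off $\overline{B^+_{\sigma_1}(0)}$, the Weyl-lemma step needed when $g$ is merely continuous, and the bookkeeping with the nested radii $\varrho<\sigma_2<\sigma_1<\sigma_0<\varrho_0$ -- all of which you handle correctly. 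One cosmetic remark: in case (b) the hypothesis should of course read $h\in C^{1,\alpha}([-\varrho_0,\varrho_0])$; you implicitly use this when forming $(\theta h)'$ on the support of the cut-off.
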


\begin{proof}
\begin{enumerate}
\item
We first prove assertion (a). Fix some $\varrho\in(0,\varrho_0)$ and choose a test function $\phi\in C^\infty_c(B)$ with $\phi=1$ in $\overline{B_\varrho(0)}$ and $\phi=0$ in $B\setminus B_{\frac{\varrho+\varrho_0}2}(0)$ as well as a simply connected domain $B^+_{\frac{\varrho+\varrho_0}2}(0)\subset G\subset B_{\varrho_0}^+(0)$ with $C^2$-boundary. Let $\sigma:B\to G$ be a conformal mapping. Then the function $\tilde z:=(\phi z)\circ\sigma\in C^1(B,\mb C)\cap C^0(\overline B,\mb C)$ solves a boundary value problem
	\bee\label{g0.2}
	\tilde z_{\overline w}=\tilde g\quad\mbox{on}\ B,\qquad \mbox{Im}\,\tilde z=\tilde h\quad\mbox{on}\ \partial B,
	\ee
where $\tilde g\in C^0(\overline B,\mb C)$, $\tilde h\in C^{\alpha}(\partial B)$ ist satisfied; here one has to use (\ref{g0.1}) as well as the well-known Kellogg-Warschawski theorem on the boundary behaviour of conformal mappings, see e.g.~\cite{pommerenke}. By subtracting a holomorphic function in $B$ with boundary values $\tilde h$ we may assume $\tilde h\equiv0$; note that this holomorphic function belongs to $C^{\alpha}(\overline B,\mb C)$ by a well-known result of I.\,I.\,Privalov. Now, any solution of (\ref{g0.2}) with $\tilde h\equiv0$ has the form
	\bee\label{g0.3}
	\tilde z(w)=-\frac1\pi\int\limits_B\frac{\tilde g(\zeta)}{\zeta-w}\,d\xi\,d\eta-\frac w\pi\overline{\int\limits_B\frac{\tilde g(\zeta)}{1-\overline w\zeta}\,d\xi\,d\eta}+ z_0,\quad w\in\overline B,
	\ee
with some constant $z_0\in\mb R$; see Theorem\,2 in \cite{sauvigny} Chap.~IX, \S\,4. Defining the \emph{Vekua-Operator}
	$$T[\tilde g](w):=-\frac1\pi\int\limits_B\frac{\tilde g(\zeta)}{\zeta-w}\,d\xi\,d\eta,\quad w\in\mb C,$$
we may rewrite (\ref{g0.3}) as
	$$\tilde z(w)=T[\tilde g](w)+\overline{T[\tilde g]\Big(\frac1{\overline w}\Big)}+z_0,\quad w\in\overline B.$$
Well-known estimates for the Vekua-operator (see \cite{vekua} Chap.~I, \S\,6) now show $\tilde z\in C^{\alpha}(\overline B)$ and hence $z\in C^{\alpha}(\overline{B_\varrho^+(0)},\mb C)$. This proves (a).
\item
For the proof of claim (b) we repeat the construction above and note that, by (a), the right hand sides in (\ref{g0.2}) satisfy $\tilde g\in C^\alpha(\overline B,\mb C)$, $\tilde h\in C^{1,\alpha}(\partial B)$. Subtracting a holomorphic function with boundary values $\tilde h$, which belongs to $C^{1,\alpha}(\overline B,\mb C)$ by Privalov's theorem, we may again assume $\tilde h\equiv0$. According to Theorem\,2 in \cite{sauvigny} Chap.~IX, \S\,4 (see also \cite{vekua} Chap.~I, \S\,8) the solution (\ref{g0.3}) of this problem belongs to $C^{1,\alpha}(\overline B,\mb C)$ and we conclude $z\in C^{1,\alpha}(\overline{B_\varrho^+(0)},\mb C)$, as asserted.
\end{enumerate}
\end{proof}

We are now prepared to give the proof of our main result, Theorem\,\ref{t1}. To this end, we define a further auxiliary function, namely
	\bee\label{g1.33}
	z^3:=-(\dot\gamma+iq)x_w^1+(1-iq\dot\gamma)x_w^2+(\psi_{p^2}-\psi_{p^1}\dot\gamma)x_w^3\in C^1(B^+,\mb C)\cap H_2^1(B^+,\mb C)
	\ee
with $q=q(\mf x)$, $\dot\gamma=\dot\gamma(x^1)$, $\psi_{p^j}=\psi_{p^j}(x^1,x^2)$; remember the definitions of $\psi$, $\gamma$, and $q$ in (\ref{g1.6}), (\ref{g1.7}), and (\ref{g1.8-}). If we set $\bs\zeta:=(\mf z,z^3)=(z^1,z^2,z^3):B^+\to\mb C^3$, we have the identity
	\bee\label{g1.33+}
	\bs\zeta(w)=\mf B(\mf x(w))\cdot\mf x_w(w),\quad w\in B^+,
	\ee
where we abbreviated
	\bee\label{g1.34}
	\mf B:=\begin{pmatrix} -i\psi_{p^1} & -i\psi_{p^2} & i\\[1ex] 1-iq\dot\gamma & \dot\gamma+iq & \psi_{p^1}+\psi_{p^2}\dot\gamma\\[1ex] -(\dot\gamma+iq) & 1-iq\dot\gamma & \psi_{p^2}-\psi_{p^1}\dot\gamma \end{pmatrix}
	\in C^1(\overline{\ml B_r},\mb C^{3\times 3});
	\ee
see (\ref{g1.9}) for the definition of $\mf z=(z^1,z^2)$. Note that 
	$$\det\mf B=i(1+\dot\gamma^2)(1-q^2+|\nabla\psi|^2)\not=0\quad\mbox{on}\ \overline{\ml B_r}$$
is true according to the smallness condition (\ref{g1.8+}). Hence, the inverse $\mf B^{-1}(\mf p)$ exists for any $\mf p\in\overline{\ml B_r}$ and we have $\mf B^{-1}\in C^1(\overline{\ml B_r},\mb C^{3\times 3})$.

We intend to employ the conformality relations, which now can be written as
	\bee\label{g1.35}
	0=\langle\mf x_w,\mf x_w\rangle=\big\langle\mf B^{-1}(\mf x)\bs\zeta,\mf B^{-1}(\mf x)\bs\zeta\big\rangle=\langle\bs\zeta,\mf C(\mf x)\bs\zeta\rangle\quad\mbox{on}\ B^+
	\ee
with the matrix  $\mf C=(c_{ij})_{i,j=1,2,3}:=\mf B^{-T}\cdot\mf B^{-1}\in C^1(\overline{\ml B_r},\mb C^{3\times3})$. A lengthy but straightforward computation yields
	\bee\label{g1.36}
	\begin{array}{rcl}
	c_{11} &=& \ds-\frac{1-q^2}{1-q^2+|\nabla\psi|^2},\\[2.5ex]
	c_{12} &=& \ds\frac{q(\psi_{p^2}-\psi_{p^1}\dot\gamma)}{(1+\dot\gamma^2)(1-q^2+|\nabla\psi|^2)}\ =\ c_{21},\\[2.5ex]
	c_{13} &=& \ds-\frac{q(\psi_{p^1}+\psi_{p^2}\dot\gamma)}{(1+\dot\gamma^2)(1-q^2+|\nabla\psi|^2)}\ =\ c_{31},\\[2.5ex]
	c_{22} &=& \ds\frac{1+\dot\gamma^2+(\psi_{p^2}-\psi_{p^1}\dot\gamma)^2}{(1+\dot\gamma^2)^2(1-q^2+|\nabla\psi|^2)},\\[2.5ex]
	c_{23} &=& \ds-\frac{(\psi_{p^1}+\psi_{p^2}\dot\gamma)(\psi_{p^2}-\psi_{p^1}\dot\gamma)}{(1+\dot\gamma^2)^2(1-q^2+|\nabla\psi|^2)}\ =\ c_{32},\\[2.5ex]
	c_{33} &=& \ds\frac{1+\dot\gamma^2+(\psi_{p^1}+\psi_{p^2}\dot\gamma)^2}{(1+\dot\gamma^2)^2(1-q^2+|\nabla\psi|^2)}.
	\end{array}
	\ee
In particular, we have $\mf C:\overline{\ml B_r}\to{\mb R}^{3\times3}$. We are now ready to give the

\begin{proof}[Proof of Theorem\,\ref{t1}.]
\begin{enumerate}
\item
We write (\ref{g1.35}) in the form
	$$0=\sum_{j,k=1}^3c_{jk}z^jz^k=c_{33}(z^3)^2+2(c_{13}z^1+c_{23}z^2)z^3+\sum_{j,k=1}^2c_{jk}z^jz^k\quad\mbox{on}\ B^+,$$
where we abbreviated $c_{jk}=c_{jk}\circ\mf x$. Since $c_{33}>0$ holds on $\overline{\ml B_r}$ due to (\ref{g1.36}), we may rewrite this identity as
	\bee\label{g1.37}
	\Big(z^3+\sum_{j=1}^2\frac{c_{j3}}{c_{33}}z^j\Big)^2=\Big(\sum_{j=1}^2\frac{c_{j3}}{c_{33}}z^j\Big)^2-\sum_{j,k=1}^2\frac{c_{jk}}{c_{33}}z^jz^k\quad\mbox{on}\ B^+.
	\ee
With Lemma\,\ref{l2}, we extend the right hand side of (\ref{g1.37}) to a continuous function on $B^+\cup I$. Lemma\,\ref{l3}\,(a) thus yields that also $z^3+\sum_{j=1}^2\frac{c_{j3}}{c_{33}}z^j$ and, again due to Lemma\,\ref{l2}, $\bs\zeta=(z^1,z^2,z^3)$ can be extended continuously to $B^+\cup I$. The definition (\ref{g1.33+}) of $\bs\zeta$ as well as $\det\mf B\not=0$ then imply $\mf x\in C^1(B^+\cup I,\mb R^3)$.
\item
Now we prove part (i) of the theorem. For fixed $\varrho_0\in(0,1)$ and any $\mu\in(0,1)$ the right hand side of (\ref{g1.37}) belongs to $C^\mu([-\varrho_0,\varrho_0],\mb C)$ according to Lemma\,\ref{l2} and $\mf x\in C^1(B^+\cup I,\mb R^3)$. In addition, the imaginary part of the right hand side vanishes on $[-\varrho_0,\varrho_0]$ due to $\mbox{Im}(z^1)=\mbox{Im}(z^2)=0$ on $I$ (see again Lemma\,\ref{l2}) and to $\mf C:\overline{\ml B_r}\to{\mb R}^{3\times3}$ as shown above. Hence, the function $f=z^3+\sum_{j=1}^2\frac{c_{j3}}{c_{33}}z^j\in C^0(I,\mb C)$ satisfies the assumptions of Lemma\,\ref{l3}\,(b) for any $\alpha\in(0,\frac12)$. We conclude $f\in C^{\alpha}([-\varrho_0,\varrho_0],\mb C)$ and, by Lemma\,\ref{l2}, also $\bs\zeta\in C^{\alpha}([-\varrho_0,\varrho_0],\mb C^3)$ for any $\alpha\in(0,\frac12)$. If we differentiate (\ref{g1.33+}) w.r.t.~$\overline w$ and apply Rellich's system (\ref{g1.25}) we obtain 
	$$\bs\zeta_{\overline w}=\mf g\quad\mbox{on}\ B^+\qquad\mbox{with some}\quad \mf g\in C^0(B^+\cup I,\mb C^3).$$
Consequently, we may apply Lemma\,\ref{l4}\,(a) to $\bs\zeta$ and find $\bs\zeta\in C^\alpha(\overline{B_\varrho^+(0)},\mb C^3)$ as well as $\mf x\in C^{1,\alpha}(\overline{B_\varrho^+(0)},\mb R^3)$ for any $\varrho\in(0,\varrho_0)$ and any $\alpha\in(0,\frac12)$. Since we localized around an arbitrary point $w_0\in I$, the proof of Theorem\,\ref{t1}\,(a) is completed.
\item
For the proof of Theorem\,\ref{t1}\,(ii) we assume $\ml S\in C^{2,\beta}$, $\mf Q\in C^{1,\beta}(\mb R^3,\mb R^3)$ with some $\beta\in(0,1)$. Then we also have $\mf B\in C^{1,\beta}(\overline{\ml B_r},\mb R^{3\times3})$ and by part (i) we know, for instance,  $\mf x\in C^{1,\frac14}(B^+,\mb R^3)$. Set $\gamma:=\min\{\frac14,\beta\}$, define $\mf z=(z^1,z^2)$ by (\ref{g1.9}) and differentiate these equations w.r.t.~$\overline w$. Then we obtain 
	$$\mf z_{\overline w}=\mf g_0\quad\mbox{on}\ B^+,\qquad \mbox{Im}\,\mf z=\bs0\quad\mbox{on}\ I$$
with some $\mf g_0\in C^\gamma(B^+\cup I,\mb C^2)$. From Lemma\,\ref{l4}\,(b) we thus conclude $\mf z\in C^{1,\gamma}([-\varrho,\varrho],\mb C^2)$ for any $\varrho\in(0,1)$. In particular, the right hand side of equation (\ref{g1.37}) belongs to $C^{1}([-\varrho,\varrho],\mb C)$ and Lemma\,\ref{l3}\,(b) shows $\bs\zeta\in C^{\frac12}([-\varrho,\varrho],\mb C^3)$ for any $\varrho\in(0,1)$. Now Lemma\,\ref{l4}\,(a) can be applied to get $\bs\zeta\in C^{\frac12}(\overline{B_\varrho^+(0)},\mb C^3)$ and we finally arrive at $\mf x\in C^{1,\frac12}(B^+\cup I,\mb R^3)$, as asserted.

\vspace{-2.5ex}
\end{enumerate}
\end{proof}

We conclude the paper with the

\begin{proof}[Proof of Theorem\,\ref{t2}.]
We choose a branch point $w_0\in I$ and assume $\mf x(w_0)\in\partial S$; compare Remark\,\ref{r4} above. We localize as above -- note especially $w_0\mapsto 0$ -- and define $\mf z=(z^1,z^2)$ by (\ref{g1.9}). Reflecting $\mf z$ as in (\ref{g1.19}), the resulting function $\hat{\mf z}:B\to\mb C^2$ satisfies $\hat{\mf z}\in C^1(B\setminus I,\mb C^2)\cap C^0(B,\mb C^2)$ and $\mbox{Im}\,\hat{\mf z}=\bs0$ on $I$ according to Lemma\,\ref{l2}.

Now choose an arbitrary domain $D\subset\subset B$ with piecewise smooth boundary. Then the arguments leading to formula (\ref{g1.20}) in Lemma\,\ref{l1} yield
	$$\frac1{2i}\oint\limits_{\partial D}\langle\hat{\mf z},\bs\varphi\rangle\,dw=\int\limits_D\big(\langle\hat{\mf z},\bs\varphi_{\overline w}\rangle+|\hat{\mf z}|^2\langle\mf h,\bs\varphi\rangle\big)\,du\,dv\quad\mbox{for all}
	\ \bs\varphi\in C^1(B,\mb C^2);$$
here $\mf h:B\to\mb C^2$ denotes some bounded function. According to the boundedness of $\hat{\mf z}$ on $D$ we find a constant $c>0$ such that
	$$\bigg|\oint\limits_{\partial D}\langle\hat{\mf z},\bs\varphi\rangle\,dw\bigg|\le 2\int\limits_D\big(|\bs\varphi_{\overline w}|+c|\bs\varphi|\big)|\hat{\mf z}|\,du\,dv\quad\mbox{for all}
	\ \bs\varphi\in C^1(B,\mb C^2).$$
The Hartman-Wintner technique -- see e.g.~Theorem\,1 in \cite{dht} Section 3.1 -- now implies the existence of some $m\in\mb N$ and some vector $\hat{\mf b}\in\mb C^2\setminus\{\bs0\}$ such that 
	\bee\label{g1.38}
	\hat{\mf z}(w)=\hat{\mf b}w^m+o(|w|^m)\quad\mbox{as}\ w\to 0.
	\ee
Note here that $\hat{\mf z}$ cannot vanish identically in $B$ since, otherwise, we would have $\nabla\mf x\equiv\bs0$ near $w_0$ due to Proposition\,\ref{p2}; this is impossible by our assumption $\mf x\not\equiv\mbox{const}$ as can be easily seen by employing the well-known asymptotic expansions at interior branch points.

Next, we define $z^3$ by (\ref{g1.33}) and consider $\bs\zeta=(z^1,z^2,z^3)=(\mf z,z^3)$, which can be extended to a continuous function on $\overline{B_\varrho^+(0)}$ for any $\varrho\in(0,1)$, according to part\,2 in the proof of Theorem\,\ref{t1}. In addition, we recall the relation (\ref{g1.37}), where the quantities $c_{jk}=c_{jk}\circ\mf x$ are continuous functions on $\overline{B^+}$.

Now we multiply (\ref{g1.37}) by $w^{-2m}$ and let $w\in\overline{B^+_\varrho(0)}$ tend to $0$. Due to (\ref{g1.38}), the right hand side and hence also the left hand side converge. Applying (\ref{g1.38}) again as well as a variant of Lemma\,\ref{l3}\,(a), we find $w^{-m}z^3(w)\to b^3$ as $w\to0$ with some limit $b^3\in \mb C$. Setting $\mf b:=(\hat{\mf b},b^3)\in\mb C^3$, we conclude
	\bee\label{g1.39}
	\bs\zeta(w)=\mf bw^m+o(|w|^m)\quad\mbox{as}\ w\to 0.
	\ee
This relation finally yields the announced expansion (\ref{g1.4+}) according to $\mf x_w=(\mf B^{-1}\circ\mf x)\bs\zeta$; see (\ref{g1.33+}) and recall $\det\mf B\not=0$. The relation $\langle\mf a,\mf a\rangle=0$ is now a direct consequence of the conformality relations and (\ref{g1.4+}).
\end{proof}


\begin{thebibliography}{99}
\bibitem[DHT]{dht}
U.\,Dierkes, S.\,Hildebrandt, A.\,Tromba: {\it Regularity of minimal surfaces}. Grundlehren math.~Wiss.~{\bf 340}. Springer, Berlin Heidelberg 2010. 
\bibitem[Dz]{dziuk}
G.\,Dziuk: {\it On the boundary behavior of partially free minimal surfaces}. Manuscr.~Math.~{\bf 35}, 105-123 (1981).
\bibitem[GHN1]{grueter-hildebrandt-nitsche1}
M.\,Gr\"uter, S.\,Hildebrandt, J.C.C.\,Nitsche: {\it On the boundary behavior of minimal surfaces with a free boundary which are not minima of the area}. Manscr.~Math.~{\bf 35}, 387-410 (1981).
\bibitem[GHN2]{grueter-hildebrandt-nitsche2}
M.\,Gr\"uter, S.\,Hildebrandt, J.C.C.\,Nitsche: {\it Regularity for surfaces of constant mean curvature with free boundaries}. Acta Math.~{\bf 156}, 119-152 (1986).
\bibitem[Ha]{hardt}
F.\,P.\,Hardt: {\it Zur Regularit\"at von $H$-Fl\"achen mit freiem Rand.} Math. Z.~{\bf 150}, 71-74 (1976).
\bibitem[HJ]{hildebrandt-jager}
S.\,Hildebrandt, W.\,J\"ager: {\it On the regularity of surfaces with prescribed mean curvature at a free boundary}. Math.~Z.~{\bf 118}, 289-308 (1970).
\bibitem[HN1]{hildebrandt-nitsche1}
S.\,Hildebrandt, J.C.C.\,Nitsche: {\it Minimal surfaces with free boundaries}. Acta Math.~{\bf 143}, 251-272 (1979).
\bibitem[HN2]{hildebrandt-nitsche2}
S.\,Hildebrandt, J.C.C.\,Nitsche: {\it Optimal regularity for minimal surfaces with free boundary}. Manuscr.~Math.~{\bf 33}, 357-364 (1981). 
\bibitem[M1]{mueller1}
F.\,M\"uller: {\it On the analytic continuation of $H$-surfaces across the free boundary}. Analysis {\bf 22}, 201-218 (2002).
\bibitem[M2]{mueller2}
F.\,M\"uller: {\it On stable surfaces of prescribed mean curvature with partially free boundaries}. Calc.~Var.~{\bf 24}, 289-308 (2005).
\bibitem[M3]{mueller3-}
F.\,M\"uller: {\it Investigations on the regularity of surfaces with prescribed mean curvature and partially free boundaries}. Habilitationsschrift, BTU Cottbus (2007).
\bibitem[M4]{mueller3}
F.\,M\"uller: {\it On the regularity of $H$-surfaces with free boundaries on a smooth support manifold}. Analysis {\bf 28}, 401-419 (2008).
\bibitem[P]{pommerenke}
C.\,Pommerenke: {\it Boundary behaviour of conformal maps}. Springer, Berlin Heidelberg 1992. 
\bibitem[S]{sauvigny}
F.\,Sauvigny: {\it Partial differential equations 2 - Functional analytic methods}. Springer, Berlin Heidelberg 2006. 
\bibitem[V]{vekua}
I.\,N.\,Vekua: {\it Generalized analytic functions}. Pergamon Press, Oxford London New\,York Paris 1962.
\bibitem[Y]{ye}
R.\,Ye: {\it Regularity of a minimal surface at its free boundary}. Math.~Z.~{\bf 198}, 261-275 (1988). 
\end{thebibliography}
\end{document}